\newtheorem{lemma}{Lemma}[section]
\newtheorem{theorem}{Theorem}[section]
\newtheorem{definition}{Definition}[section]
\newtheorem{proposition}{Proposition}[section]
\newtheorem{corollary}{Corollary}
\newtheorem{remark}{Remark}
\begin{document}
\title[Large and moderate deviation principles for Engel continued fractions\dots]{Large and moderate deviation principles for Engel continued fractions}
\keywords{Engel continued fractions, Large deviation, Moderate deviation.}
\subjclass{11A55, 60F10}

\author{\small Lulu Fang}
\address{\tiny School of Mathematics,
South China University of Technology,
Guangzhou 510640 P.R. China}
\email{\scriptsize f.lulu@mail.scut.edu.cn}

\author{\small Min Wu}
\address{\tiny School of Mathematics,
South China University of Technology,
Guangzhou 510640 P.R. China}
\email{\scriptsize wumin@scut.edu.cn}

\author{Lei Shang}
\address{\tiny School of Mathematical Science,
Anhui University,
Hefei 230601, P.R. China}
\email{\scriptsize auleishang@gmail.com}


\begin{abstract}
Large and moderate deviation principles are proved for Engel continued fractions, a new type of continued fraction expansion with non-decreasing partial quotients in number theory.
\end{abstract}
\maketitle

\section{Introduction}

Given a real number, there are various ways to represent it as an expansion of digits, such as continued fractions (see Khintchine \cite{lesKhi64}) and series expansions (see Galambos \cite{lesGal76}) including $\beta$-expansions \cite{lesRen57}, L\"{u}roth expansions \cite{lesLur83},
Engel expansions \cite{lesE.R.S58} and alternating Engel expansions \cite{lesSha86} and so on. Perhaps the most well-known representation of real numbers is continued fractions. Over the last thirty years, considerable interests are shown in various continued fraction expansions.
Examples of such continued fraction expansions include, for example, backward continued fractions \cite{lesAF84}, $a/b$-continued fractions \cite{lesDKL15}, Oppenheim continued fractions \cite{lesFWW07}, multidimensional continued fractions \cite{lesHK01}, $\alpha$-continued fractions \cite{lesNN02} and Rosen continued fractions \cite{lesDKS09, lesRos54}. Most of these continued fraction expansions have invariant and ergodic measures which are absolutely continuous with respect to Lebesgue measure. By means of the ergodic theory and probability theory, some metric and statistical properties of these continued fraction expansions have been well studied when the real number is sampled from the uniform distribution in some interval. Besides, series expansions of real numbers also gave rise to a fruitful study of their ergodic and statistical properties. The history of this topic may begin with Borel, Sierpi\'{n}ski, Kuzmin, L\'{e}vy and continue with the seminal work of the Hungarian school (Erd\H{o}s, Tur\'{a}n, R\'{e}nyi, Sz\"{u}sz, R\'{e}v\'{e}sz, Galambos). Such a theory of using probability theory to answer questions of number theory is called the \emph{probabilistic number theory}, see Elliott \cite{lesEll80}, R\'{e}nyi \cite{lesRen58} and Tenenbaum \cite{lesTen95}.
Essentially these classical studies for continued fractions and series expansions
mainly focus on the distribution law, the law of large numbers, the central limit theorem and the law of the iterated logarithm for the digit sequence occurring in these expansions.
However, it is worth pointing out that these classical limit theorems basically concern that the averages taken over large samples converge to expected values in some sense, but say little or nothing about the rate of convergence. One way to address this is the theory of large deviations in modern probability theory.

Let $(\Omega, \mathcal{F}, \mathbf{P})$ be a probability space and $\{X_n: n \geq 1\}$ be a sequence of real-valued random variables defined on $(\Omega, \mathcal{F}, \mathbf{P})$.
A function $I: \mathbb{R} \to [0,\infty]$ is called a \emph{good rate function} if it is lower semi-continuous and has compact level sets.
We say that the sequence $\{X_n: n \geq 1\}$ satisfies a \emph{large deviation principle} (LDP for short) with speed $n$ and good rate function $I$ if for any Borel set $\Gamma$,
\begin{equation*}
-\inf_{x \in \Gamma^\circ}I(x) \leq \liminf_{n \to \infty} \frac{1}{n}\log \mathbf{P}(X_n \in \Gamma) \leq \limsup_{n \to \infty} \frac{1}{n}\log \mathbf{P}(X_n \in \Gamma) \leq -\inf_{x \in \overline{\Gamma}}I(x),
\end{equation*}
where $\Gamma^\circ$ and $\overline{\Gamma}$ denotes the interior and the closure of $\Gamma$ respectively. We use the notation $\mathbf{E}(\xi)$ to denote the expectation of a random variable $\xi$ with respect to the probability measure $\mathbf{P}$.~G\"{a}rtner-Ellis theorem (see Theorem 2.3.6 in \cite{lesD.Z1998}) tells us that the rate function is often given in term of the Legendre transform of the pressure function. That is, $I(x) = \sup_{\theta \in \mathbb{R}}\left\{\theta x - \Lambda(\theta)\right\}$,
where the \emph{pressure function} $\Lambda(\cdot)$ is defined as
\begin{equation}
\Lambda(\theta): = \lim_{n \to \infty} \frac{1}{n} \log \mathbf{E}(e^{n\theta \cdot X_n})
\end{equation}
for any $\theta \in \mathbb{R}$ when it exists.
For instance, we can study the probability that the empirical mean of a sequence of random variables deviates away from its ergodic mean. These probabilities are exponentially small in general and follow the large deviation principle.
Formally, there is no distinction between the large deviation principle and the moderate deviation principle (MDP for short). Usually LDP characterizes the convergence speed of the law of large numbers, while MDP describes the speed of convergence between the law of large numbers and the central limit theorem.
For more details about large and moderate deviations, we refer the reader to Dembo and Zeitouni \cite{lesD.Z1998}, Touchette \cite{lesTou09} and Varadhan \cite{lesVar1984}.

The theory of large deviations plays an important role in the framework of the stochastic processes. Nowadays, it has been developed quite rapidly in different directions and many applications arise, for example, in mathematics, statistics, computer science, physics, and in other fields.
In this paper, we will investigate the large and moderate deviation principles for Engel continued fractions (see Section 2) which is a new type of continued fraction expansion with non-decreasing partial quotients.
Recently, such kinds of problems have been considered by Zhu \cite{lesZhu2014}, Hu \cite{lesHu2015}, Fang \cite{lesFangSPL, lesFangJNT}, and Fang and Wu \cite{lesFW}. Zhu studied the large deviations for Engel expansions (see \cite{lesE.R.S58}), whose digit sequence is also non-decreasing.
Erd\H{o}s et al.~\cite{lesE.R.S58} proved that the digit sequence of Engel expansions forms a time-homogeneous Markov chain with explicit transition probability functions. And they also gave an explicit computation for the distribution of the digit and its asymptotic analysis.
These properties play an important role in Zhu's work.
However, the Engel continued fraction system is not Markovian (see Remark \ref{NMC}). That is where the difficulty is.
In \cite{lesFangSPL}, the author studied the large deviations for modified Engel continued fractions. This new continued fraction system is a small modification of Engel continued fractions but its partial quotient sequence is strictly increasing.
In \cite{lesFangJNT}, the author established the large deviations for alternating Engel expansions. This alternating expansion can be treated as the Engel expansion with alternating terms while its digit sequence is strictly increasing. Although these two expansions share the same classical limit theorems (see \cite{lesV.B.P99, lesWil73}), the author also remarked that there is a difference between these expansions in the context of large deviations.
Here we remark that the rate functions of the large deviations in \cite{lesFangSPL} and \cite{lesFangJNT} are same but different from Zhu's. The authors in \cite{lesFW} considered two interesting discrete Markov processes introduced by Williams \cite{lesWil73}, which share the same classical limit theorems
but have a difference in the context of large deviations.
Besides, the theory of large deviations also has been applied to the analytic number theory, see F\'{e}ray et al.~\cite{lesFMN}, Giuliano and Macci \cite{lesGM15},
Mehrdad and Zhu \cite{lesMZ16, lesMZar} and Radziwill \cite{lesRad09}. It seems that the large and moderate deviations might have the potential to become the useful tools in studying probabilistic and analytic number theory.

This paper is organized as follows. Section 2 is devoted to stating the large and moderate deviation principles for Engel continued fractions.
In Section 3, we recall the basic properties of the Engel continued fraction expansion and show that the partial quotient sequence of Engel continued fractions
 is not Markovian (see Remark \ref{NMC}) but very close to a homogeneous Markov chain (see Proposition \ref{Markov chain}), which plays an important role in our proofs. The proofs of our main results are given in Section 4. In Section 5,  we will see that the theory of large deviations is a more useful tool of dealing with such questions than other tools (such as, fractal) in some sense.

\section{Main results}

In 2002, Hartono et al.~\cite{lesHKS02} introduced a new continued fraction algorithm with non-decreasing partial quotients, called the \emph{Engel continued fraction} (ECF, for short) expansion. The name of this new continued fraction expansion is borrowed from the classical Engel expansion (see Erd\H{o}s et al.~\cite{lesE.R.S58}). Now we give the algorithm of this new continued fraction expansion. Let $T_E: (0,1] \longrightarrow (0,1]$ be the \emph{ECF map} given by
\[
T_E x = \frac{1}{\left[\frac{1}{x}\right]} \cdot \left(\frac{1}{x}-\left[\frac{1}{x}\right]\right),
\]
where $[x]$ denotes the greatest integer not exceeding $x$. Then every real number $x \in (0,1]$ can be uniquely written as
\begin{equation}\label{ECF}
x = \dfrac{1}{b_1(x) +\dfrac{b_1(x)}{b_2(x) + \ddots +\dfrac{b_{n-1}(x)}{b_n(x)+ \ddots}}},
\end{equation}
where $b_1(x) = [1/x] \in \mathbb{N}$ and $b_{n +1}(x) = b_1(T_E^nx)$ with $b_{n+1}(x) \geq b_n(x)$ for all $n \geq 1$. The representation (\ref{ECF}) is said to be the \emph{ECF expansion} of $x$ and $b_n(x)$ are called the \emph{partial quotients} of the ECF expansion of $x$ ($n \in \mathbb{N}$). Sometimes we write the form (\ref{ECF}) as $[[b_1(x),b_2(x), \cdots, b_n(x),\cdots]]$. Hartono et al.~\cite{lesHKS02} studied the arithmetic and ergodic properties of $T_E$ associated to this new continued fraction expansion. Moreover, they showed that $T_E$ is ergodic with respect to Lebesgue measure and proved that $T_E$ has no finite invariant measure equivalent to the Lebesgue measure, but has infinitely many $\sigma$-finite nvariant measures.
This new class of continued fractions can be applied to designing a pseudo random bit generator (PRBG) and proposing a new scheme for image cryptosystems, see Masmoudi et al. \cite{lesMPB10, lesMBP12}.

Now we denote $(\Omega, \mathcal{F}, \mathbf{P})$ by the probability space, where $\Omega  = (0,1]$, $\mathcal{F}$ is the Borel $\sigma$-algebra on $(0,1]$ and $\mathbf{P}$ denotes the Lebesgue measure on $(0,1]$. Kraaikamp and Wu \cite{lesKW04} proved a strong law of large numbers for the partial quotient sequence $\{b_n: n \geq 1\}$, i.e., for $\mathbf{P}$-almost surely $x \in (0,1]$,
\begin{equation*}
\lim_{n \to \infty} \frac{1}{n} \log b_n(x) = 1.
\end{equation*}
This implies that the probability of the event that $\frac{\log b_n(x)}{n}$ deviates away from its ergodic mean 1 tends to zero as $n$ goes to infinity. However, it does not give this decay a speed. This leads to the study of large deviations for the ECF expansion.

\begin{theorem}\label{LDP}
Let $\{b_n: n \geq 1\}$ be the partial quotient sequence of the ECF expansion. Then $\left\{\frac{\log b_n - n}{n}: n \geq 1\right\}$ satisfies a LDP with speed $n$ and good rate function
\begin{equation}\label{LDP equation}
I(x)=
\begin{cases}
x - \log (x +1),  &\text{if $x >-\frac{\sqrt{5}-1}{2}$};\\
-\frac{\sqrt{5}+1}{2}(x +1)+ 2 \log \frac{\sqrt{5}+1}{2},  &\text{if $ -1 \leq x \leq-\frac{\sqrt{5}-1}{2} $};\\
+\infty,  & \text{otherwise}.
\end{cases}
\end{equation}
\end{theorem}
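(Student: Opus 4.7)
The plan is to apply the G\"artner--Ellis theorem (Theorem 2.3.6 of \cite{lesD.Z1998}). Setting $X_n := (\log b_n - n)/n$, I will compute the pressure
\begin{equation*}
\Lambda(\theta) := \lim_{n\to\infty} \frac{1}{n} \log \mathbf{E}\!\left[e^{n\theta X_n}\right] = \lim_{n\to\infty} \frac{1}{n}\log \mathbf{E}[b_n^\theta] - \theta
\end{equation*}
and recover $I$ as its Legendre transform. The central tool is Proposition \ref{Markov chain}: although $\{b_n\}$ is not Markov (Remark \ref{NMC}), it is quantitatively close to a Markov chain with one-step transition essentially $P(k,j) \propto (k+1)/[(j+1)(j+2)]$ for $j \geq k$. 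Abbreviate $\phi := (\sqrt{5}+1)/2$, and note the identity $1 - \phi^2 = -\phi$.

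I expect two competing contributions to $\mathbf{E}[b_n^\theta]$ for $\theta < 1$ (for $\theta \geq 1$ the moment is infinite, so $\Lambda \equiv +\infty$). The \emph{moderate} contribution is captured by the transfer operator $(L f)(k) := \sum_{j\geq k} P(k,j) f(j)$: the estimate $\sum_{j \geq k} j^\theta/[(j+1)(j+2)] \sim k^{\theta-1}/(1-\theta)$ makes $f_\theta(k) := k^\theta$ an approximate eigenfunction of $L$ with eigenvalue $(1-\theta)^{-1}$, yielding a term of order $(1-\theta)^{-n}$. The \emph{stuck} contribution comes from the atom at $b_n=1$: since the partial quotients are non-decreasing, $\{b_n=1\} = \{b_1 = \cdots = b_n = 1\}$ equals the set of $x$ remaining in $(1/2,1]$ under $n$ iterates of the branch map $Sx := (1-x)/x$; this map has unique fixed point $1/\phi$ with $|S'(1/\phi)| = \phi^2$, so $\mathbf{P}(b_n=1) \asymp \phi^{-2n}$. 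The two contributions cross exactly at $(1-\theta)^{-1} = \phi^2$, i.e., $\theta = -\phi$, giving
\begin{equation*}
\Lambda(\theta) = \begin{cases} -\theta - \log(1-\theta), & -\phi < \theta < 1, \\ -\theta - 2\log\phi, & \theta \leq -\phi, \\ +\infty, & \theta \geq 1, \end{cases}
\end{equation*}
whose Legendre transform is precisely the piecewise $I$ in the statement; the subdifferential $[-1, -1/\phi]$ of $\Lambda$ at the kink $\theta = -\phi$ produces the linear piece of $I$ on $[-1, -1/\phi]$.

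The LDP upper bound for closed sets is then the standard Chernoff consequence of $\Lambda$, and the lower bound at exposed points $x > -1/\phi$ follows from G\"artner--Ellis via exponential tilting. For non-exposed $x \in [-1, -1/\phi]$, $\Lambda$ fails to be essentially smooth and I would give a direct two-phase construction: with $\alpha^* := 1 - \phi^2(x+1) \in [0,1]$ and $r^* := (x+1)/(1-\alpha^*) = 1/\phi^2$, the hybrid event
\begin{equation*}
E_n := \{b_1 = \cdots = b_{\lfloor \alpha^* n\rfloor} = 1\} \cap \{\lvert X_n - x\rvert < \varepsilon\}
\end{equation*}
should have probability bounded below by (stuck cost) $\phi^{-2\alpha^* n}$ times (growth cost) $\exp(-(1-\alpha^*) n\, I_M(r^*-1))$, where $I_M(y) := y - \log(y+1)$ is the Markov-regime rate function. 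Combining gives $\exp(-n I(x) + o(n))$, since a direct calculation yields $2\alpha^* \log\phi + (1-\alpha^*) I_M(r^*-1) = -\phi(x+1) + 2\log\phi$.

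The main obstacle is making the ``almost Markov'' approximation of Proposition \ref{Markov chain} quantitatively sharp enough that its error terms do not corrupt the logarithmic asymptotics of $\mathbf{E}[b_n^\theta]$; a secondary difficulty is the hybrid lower bound, which requires a quantitative weak-Markov restart at the stopping time $\lfloor \alpha^* n\rfloor$ to decouple the stuck and growth phases and then apply the already-established exposed-point LDP to the latter. The appearance of the golden ratio is forced by $\phi^2 - \phi - 1 = 0$, which makes the expansion rate of $S$ at $1/\phi$ match the crossover of the two moment-generating-function contributions.
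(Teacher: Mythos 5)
Your proposal is correct in substance and follows the same overall route as the paper: Gärtner--Ellis applied to $X_n=(\log b_n-n)/n$, with the pressure $\Lambda(\theta)=-\theta+\max\{-2\log\phi,\,-\log(1-\theta)\}$ for $\theta<1$ (and $+\infty$ for $\theta\ge 1$) obtained from exactly the two competing contributions you describe; the paper's Lemma \ref{jixian} carries out your ``approximate eigenfunction'' iteration using the two-sided bounds of Proposition \ref{Markov chain} and Lemma \ref{jichu}, and its Legendre transform computation agrees with yours. The differences are worth recording. First, you derive $\mathbf{P}(b_n=1)\asymp\phi^{-2n}$ dynamically from the fixed point $1/\phi$ of the branch map $x\mapsto 1/x-1$; the paper instead reads it off the exact cylinder length $\mathbf{P}(B(1,\dots,1))=1/(Q_n(Q_n+Q_{n-1}))$ with $Q_n$ the Fibonacci numbers (Lemma \ref{number}) --- equivalent, but the paper's version comes with explicit constants that are then reused. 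Second, for the upper bound the paper needs one ingredient you leave implicit under ``making the almost-Markov approximation sharp enough'': the combinatorial count $\sharp\Sigma_{n,\le j}=C^{j-1}_{n+j-1}$ of admissible words (Lemma \ref{Numbers}), which shows $\mathbf{P}(b_n\le j)$ exceeds $\phi^{-2n}$ only by a factor polynomial in $n$, so the ``small digit'' block does not contaminate the exponential rate. Third --- and this is where you go beyond the paper --- you correctly observe that $\Lambda$ has a kink at $\theta=-\phi$ (left derivative $-1$, right derivative $-1/\phi$), so it is not essentially smooth and the plain Gärtner--Ellis lower bound only covers exposed points $x>-1/\phi$; your two-phase construction for $x\in[-1,-1/\phi]$ (stay at $b=1$ for $\alpha^*n$ steps, then run the Markov regime at rate $r^*=1/\phi^2$) is the right fix, and your identity $2\alpha^*\log\phi+(1-\alpha^*)I_M(r^*-1)=-\phi(x+1)+2\log\phi$ checks out. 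The paper simply invokes Gärtner--Ellis without comment on this point, so your treatment is the more complete one, though the decoupling at the switching time still needs the quantitative restart you flag. One typo: the crossover condition should read $1-\theta=\phi^2$ (equivalently $(1-\theta)^{-1}=\phi^{-2}$), not $(1-\theta)^{-1}=\phi^2$; your conclusion $\theta=-\phi$ is nevertheless correct.
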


\begin{remark}
As we have pointed out at the end of introduction, the rate functions of the large deviations in \cite{lesFangSPL} and \cite{lesFangJNT} are same but different from Zhu's and also different from ours.
What is interesting though, is that the rate functions of the large deviations for Engel continued fractions and Engel expansions are similar in structure (comparing our Theorem \ref{LDP} with Theorem 1.2 of Zhu \cite{lesZhu2014}).
We think the main reason is that the digit sequence of modified Engel continued fractions or alternating Engel expansions is strictly increasing; while the digit sequence of Engel expansions or Engel continued fractions is non-decreasing (see Remark \ref{vs} for more details).
\end{remark}


\begin{remark} If we consider the large deviations for the sequence $\left\{\frac{\log b_n - n}{n}: n \geq 1\right\}$ with $b_1 \geq b~(b \in \mathbb{N})$,
we will obtain that it also satisfies a LDP with speed $n$ and good rate function
\begin{equation*}
I_b(x)=
\begin{cases}
x - \log (x +1),  &\text{if $x >-1+\xi^{-1}_b$};\\
(1-\xi_b)(x +1)+ \log \xi_b,  &\text{if $ -1 \leq x \leq -1+\xi^{-1}_b$};\\
+\infty,  & \text{otherwise},
\end{cases}
\end{equation*}
where $\xi_b= (b^2+2+\sqrt{b^2+4b})/(2b)$. This indicates that the initial value of $b_1$ will affect the rate function of large deviations for the ECF expansion. Moreover, it is easy to check $I_1(x) = I (x)$ by (\ref{LDP equation}) and $I_b(x) \to I_\infty (x)$  as $b \to \infty$, where $I_\infty$ is defied as
\[
I_\infty(x)=
\begin{cases}
x - \log (x +1),  &\text{if $x >-1$};\\
+\infty,  & \text{otherwise}.
\end{cases}
\]
Here we emphasize that $I_\infty$ is the rate function of large deviations for modified ECF expansions \cite{lesFangSPL} or alternating Engel expansions \cite{lesFangJNT} or the empirical mean of independent and identically distributed exponential random variables with parameter 1 (see \cite[Exercise 2.2.23]{lesD.Z1998} and \cite[Example 3.2]{lesTou09}).
\end{remark}

As an application of Theorem \ref{LDP}, we obtain that the Lebesgue measure of the set of points $x \in (0,1]$ for which $\log b_n(x)/n$ deviates away from its ergodic mean $1$ decays to zero exponentially as $n$ tends to infinity.

\begin{corollary}\label{Cor}
For any $\varepsilon > 0$, there exist two positive constants $\alpha$ and $\beta$ (both only depending on $\varepsilon$) such that for all $n \geq 1$, we have
\[
\mathbf{P}\left\{x \in (0,1]:\left|\frac{\log b_n(x)}{n}- 1\right| \geq \varepsilon\right\}\leq \alpha e^{-\beta n}.
\]
\end{corollary}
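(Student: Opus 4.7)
The plan is to derive the corollary directly from Theorem \ref{LDP} by applying the large deviation upper bound to the closed set $\Gamma_\varepsilon := (-\infty,-\varepsilon] \cup [\varepsilon,+\infty)$. Writing $X_n := (\log b_n - n)/n$, the event in the corollary is precisely $\{X_n \in \Gamma_\varepsilon\}$, so everything reduces to showing that $c_\varepsilon := \inf_{x \in \Gamma_\varepsilon} I(x) > 0$ and then absorbing the first finitely many $n$ into the prefactor.

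First I would check that $I$ attains its unique zero at $x=0$ and is strictly positive elsewhere. On the piece $x > -\frac{\sqrt 5-1}{2}$ one has $I'(x) = x/(x+1)$, which vanishes only at $x=0$; thus $I$ is strictly decreasing on $(-\frac{\sqrt 5-1}{2},0)$, strictly increasing on $(0,+\infty)$ with $I(0)=0$ and $I(x)\to+\infty$ as $x\to+\infty$. On the piece $-1\le x \le -\frac{\sqrt 5-1}{2}$ the rate function is affine with negative slope $-\frac{\sqrt 5+1}{2}$, so it is strictly decreasing and matches the first piece continuously at $x=-\frac{\sqrt 5-1}{2}$, where one checks the common value $-\frac{\sqrt 5-1}{2} + 2\log\frac{\sqrt 5+1}{2} > 0$. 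In particular $I>0$ on $[-1,0)\cup(0,+\infty)$ and $I\equiv+\infty$ outside $[-1,+\infty)$. Because $\Gamma_\varepsilon$ is closed and avoids $0$, and $I$ is lower semicontinuous with compact level sets, the infimum $c_\varepsilon = \inf_{\Gamma_\varepsilon} I$ is attained and strictly positive; in fact for $0<\varepsilon\le \frac{\sqrt 5-1}{2}$ one has $c_\varepsilon = -\varepsilon - \log(1-\varepsilon)$, while for larger $\varepsilon$ the minimum comes from $x=\varepsilon$ on the $x>0$ branch.

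Next, Theorem \ref{LDP} applied to the closed set $\Gamma_\varepsilon$ yields
\begin{equation*}
\limsup_{n\to\infty} \frac{1}{n}\log \mathbf{P}(X_n \in \Gamma_\varepsilon) \le -c_\varepsilon.
\end{equation*}
Setting $\beta := c_\varepsilon/2 > 0$, there exists $N = N(\varepsilon)$ such that $\mathbf{P}(X_n \in \Gamma_\varepsilon) \le e^{-\beta n}$ for all $n \ge N$. For $1 \le n < N$ we use the trivial bound $\mathbf{P}(X_n \in \Gamma_\varepsilon) \le 1 \le e^{\beta N} e^{-\beta n}$. Choosing $\alpha := e^{\beta N}$ gives $\mathbf{P}(X_n \in \Gamma_\varepsilon) \le \alpha e^{-\beta n}$ uniformly in $n \ge 1$, and both constants depend only on $\varepsilon$, as required.

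There is no real obstacle here beyond the elementary verification that $I$ vanishes only at $0$; the only mild subtlety is handling the piecewise definition of $I$ carefully to confirm that the minimum on $\Gamma_\varepsilon$ is strictly positive (in particular that the affine branch on $[-1,-\frac{\sqrt 5-1}{2}]$ does not drop to $0$), but the explicit formulas above make this immediate.
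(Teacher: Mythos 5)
Your argument is correct and is exactly the intended derivation: the paper states the corollary as an immediate consequence of Theorem \ref{LDP} without writing out a proof, and what you supply is the standard route (verify that $I$ vanishes only at $0$ and is strictly positive elsewhere, apply the LDP upper bound to the closed set $\Gamma_\varepsilon$, then absorb the finitely many initial indices into the prefactor $\alpha$). One small slip in an aside: since $I(\varepsilon)=\varepsilon-\log(1+\varepsilon)$ is strictly smaller than $I(-\varepsilon)=-\varepsilon-\log(1-\varepsilon)$ for $0<\varepsilon<1$ (their difference $-2\varepsilon+\log\frac{1+\varepsilon}{1-\varepsilon}$ is positive), the infimum $c_\varepsilon$ is attained at $x=\varepsilon$ and equals $\varepsilon-\log(1+\varepsilon)$ rather than the value you quoted; this does not affect the proof, which only uses $c_\varepsilon>0$.
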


\begin{remark}
By Borel-Cantelli lemma, we deduce easily that this result implies the strong law of large numbers for the sequence of partial quotients of ECF expansions obtained by Kraaikamp and Wu \cite{lesKW04} in 2004.
\end{remark}

In 2007, Fan et al.~\cite{lesFWW07} established a central limit theorem for the  partial quotient sequence $\{b_n: n \geq 1\}$, which tells us by how much the quantity $\log b_n(x)$ normally exceeds its ergodic mean 1, namely by an order of $\sqrt{n}$. That is, for every $y \in \mathbb{R}$,
\begin{equation*}
\lim_{n \to \infty} \mathbf{P} \big\{x \in (0,1]:\log b_n(x) - n \geq \sqrt{n}y\big\}= 1-\frac{1}{\sqrt{2\pi}}\int_{-\infty}^y e^{-t^2/2}dt.
\end{equation*}
On the other hand, we have seen that for any $\varepsilon > 0$, the probability
\[
\mathbf{P}\big\{x \in (0,1]:\log b_n(x)- n \geq n\varepsilon\big\}
\]
decays to zero exponentially as $n$ tends to infinity. These two results imply that for any positive sequence $a_n$ with $\sqrt{n} \ll a_n \ll n$, we still have
\[
\mathbf{P}\big\{x \in (0,1]:\log b_n(x)- n \geq a_n\big\}
\]
tends to zero as $n$ goes to infinity. However, neither the above central limit theorem nor the Theorem \ref{LDP} tells us how fast this convergence is. This question is in the remit of the moderate deviation principle for ECF expansions stated below.

\begin{theorem}\label{MDP}
Let $\{b_n: n \geq 1\}$ be the partial quotient sequence of the ECF expansion and $\{a_n: n \geq 1\}$ be a positive sequence satisfying
\begin{equation}\label{a-n}
a_n \to \infty, \ \ \ \ \ \frac{a_n}{\sqrt{n\log n}} \to \infty \ \ \ \ \  and \ \ \ \ \ \frac{a_n}{n} \to 0.
\end{equation}
Then $\left\{\frac{\log b_n - n}{a_n}: n\geq 1\right\}$ satisfies an MDP with speed $n^{-1}a_n^2$ and good rate function $J(x)= x^2/2$ for any $x \in \mathbb{R}$.
\end{theorem}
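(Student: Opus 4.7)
The plan is to apply the G\"{a}rtner--Ellis theorem to the rescaled variables $Z_n:=(\log b_n - n)/a_n$. Since the putative rate function $J(x)=x^2/2$ is the Legendre transform of $\theta\mapsto\theta^2/2$, and the latter is everywhere finite and differentiable (hence essentially smooth), it suffices to verify the scaled logarithmic moment condition
\[
\Lambda(\theta) := \lim_{n\to\infty}\frac{n}{a_n^2}\log\mathbf{E}\!\left(\exp\!\left(\frac{\theta a_n}{n}(\log b_n-n)\right)\right) = \frac{\theta^2}{2}
\]
for every $\theta\in\mathbb{R}$; the full MDP then follows automatically from G\"{a}rtner--Ellis.

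The first step is to set $Y_k:=\log b_k-\log b_{k-1}$ (with $b_0:=1$) so that $\log b_n=\sum_{k=1}^n Y_k$, and to introduce $\lambda_n:=\theta a_n/n$, which tends to $0$ because $a_n/n\to 0$. By conditioning successively on $b_1,\ldots,b_{k-1}$ and invoking the near-Markov estimate of Proposition \ref{Markov chain}, I would replace the conditional distribution of $Y_k$ given the past by its Markov approximation, whose one-step cumulant generating function $\psi_\ell(\lambda):=\log\mathbf{E}(e^{\lambda Y_k}\mid b_{k-1}=\ell)$ can be read off from the explicit ECF transition formula.

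For the second step, since $\lambda_n\to 0$, I would Taylor-expand
\[
\psi_\ell(\lambda_n)=\lambda_n\mu_\ell + \tfrac12\lambda_n^2\sigma_\ell^2 + O\!\bigl(\lambda_n^3\, m_\ell^{(3)}\bigr),
\]
where $\mu_\ell,\sigma_\ell^2,m_\ell^{(3)}$ denote the conditional mean, variance, and absolute third moment of $Y_k$ given $b_{k-1}=\ell$. The same moment estimates that underlie the strong law of Kraaikamp--Wu and the CLT of Fan--Wang--Wu give $\mu_\ell\to 1$ and $\sigma_\ell^2\to 1$ with explicit rates, while $m_\ell^{(3)}$ remains bounded up to at most a logarithmic factor in $\ell$. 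Summing over $k$, subtracting the centering $n\lambda_n$, and multiplying by $n/a_n^2$ leaves $\theta^2/2$ in the limit.

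The main obstacle is controlling the accumulated errors at the previous step: the near-Markov property is only approximate, and because $b_{k+1}/b_k$ has a heavy polynomial tail the higher cumulants of $Y_k$ are not uniformly small, so their contribution must be cut off. I would address this by truncating $Y_k$ at a level $M_n\to\infty$ with $M_n=o(a_n)$, handling the tail contribution by a direct union-bound exploiting $\mathbf{P}(b_{k+1}/b_k\geq t\mid b_k=\ell)\asymp 1/t$, and checking that the truncated cumulants still satisfy the Taylor expansion above. The precise hypothesis $a_n/\sqrt{n\log n}\to\infty$ enters exactly here: it is what guarantees that the cubic remainder $n\lambda_n^3\, m^{(3)}$, together with the implicit $\log\ell$ factor in $m_\ell^{(3)}$, is of smaller order than the target $a_n^2/n$, so that the non-Markov and tail corrections vanish on the MDP scale and $\Lambda(\theta)=\theta^2/2$ is obtained.
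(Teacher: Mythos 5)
Your overall strategy --- G\"{a}rtner--Ellis applied to the scaled logarithmic moment generating function, computed by successive conditioning on the near-Markov structure so that the one-step conditional moment generating function of $Y_k=\log b_k-\log b_{k-1}$ is asymptotically $(1-\lambda)^{-1}$ --- is exactly the route the paper takes (there the iteration is written directly as a recursion for $\mathbf{E}(b_n^{\theta_n})$ using Proposition \ref{Markov chain} and Lemma \ref{jichu}). But your error analysis has a genuine gap: you have misidentified where the hypothesis $a_n/\sqrt{n\log n}\to\infty$ is needed. Given $b_{k-1}=\ell$, the tail of $Y_k$ is $\mathbf{P}(Y_k\geq y)\asymp e^{-y}$ (this is what the polynomial tail of $b_k/b_{k-1}$ becomes after taking logarithms), so the conditional moment generating function is finite on $(-\infty,1)$ uniformly in $\ell$, and all conditional moments, including $m_\ell^{(3)}$, are bounded uniformly in $\ell$ with no logarithmic factor. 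Hence the cubic remainder is $n\lambda_n^3\cdot O(1)$, and $\frac{n}{a_n^2}\cdot n\lambda_n^3=\theta^3 a_n/n\to 0$ already under the third condition in (\ref{a-n}); neither the truncation at $M_n=o(a_n)$ nor the condition $a_n\gg\sqrt{n\log n}$ is doing the work you assign to it. (The truncation is moreover delicate for the upper bound when $\theta>0$: replacing $Y_k$ by $Y_k\wedge M_n$ only decreases the exponent, so it gives a lower bound, and recovering an upper bound needs a separate H\"{o}lder-type control of $b_n^{\lambda_n}$ on the exceptional event.)

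The terms that actually force $a_n\gg\sqrt{n\log n}$, and which your sketch does not account for, are multiplicative prefactors polynomial in $n$. First, the one-step bounds carry the factor $\bigl(1+\frac1j\bigr)\bigl(1-\frac1j\bigr)^{\theta-1}$, which is close to $1$ only for large $j$; one must therefore split off the event that $b_k$ stays below a level $N(n)$ growing with $n$ (the paper takes $N=3n-1$ in Lemma \ref{MDPlemma}), and the probability of that event is bounded via the golden-ratio decay of Lemma \ref{number} multiplied by a combinatorial count of admissible low-lying blocks. Second, the surviving main term picks up the factor $\sum_{j\geq N}\mathbf{P}(b_1=j)\,j^{\theta_n}\asymp 1/n$ from the initial distribution. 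Both effects contribute an additive $O(\log n)$ to $\log\mathbf{E}(b_n^{\theta_n})$, and it is precisely $\frac{n\log n}{a_n^2}\to 0$ that makes these contributions vanish at speed $a_n^2/n$. Until your argument isolates and bounds these boundary contributions, the limit $\Lambda(\theta)=\theta^2/2$ is not established.
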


\begin{remark}
We may obtain this MDP result if the second condition of $a_n$ in (\ref{a-n}) is replaced by $a_n /\sqrt{n} \to \infty$ as $n \to \infty$. However, we need the condition $\frac{a_n}{\sqrt{n\log n}} \to \infty$ to avoid some technical difficulties. The main reason is that we cannot find some finer estimates when we make use of the G\"{a}rtner-Ellis theorem. For this reason, we believe that
\[
\liminf_{n \to \infty} \frac{\log b_n(x) - n}{\sqrt{2n \log\log n}} =-1\ \ \ \ \text{and}\ \ \ \ \limsup_{n \to \infty} \frac{\log b_n(x) - n}{\sqrt{2n \log\log n}} =1
\]
hold for $\mathbf{P}$-almost surely $x \in (0,1]$ since the law of the iterated logarithms can be obtained by using some moderate deviation inequalities (see \cite{lesBCR06, lesChen07, lesGao08}).
\end{remark}

As an application of Theorem \ref{MDP}, we immediately get the following corollary.
\begin{corollary}
Let $\{b_n: n \geq 1\}$ be the partial quotient sequence of the ECF expansion and $a_n = n^p$ with $p \in (1/2,1)$. Then $\left\{\frac{\log b_n - n}{n^p}: n\geq 1\right\}$ satisfies an MDP with speed $n^{2p-1}$ and good rate function $J(x)= x^2/2$ for any $x \in \mathbb{R}$.
\end{corollary}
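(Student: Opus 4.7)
The plan is to apply Theorem \ref{MDP} directly with the specific choice $a_n = n^p$. Since that theorem already provides a general moderate deviation principle for any positive sequence satisfying the three hypotheses in (\ref{a-n}), the task reduces to verifying that $a_n = n^p$ with $p \in (1/2, 1)$ meets these hypotheses; no new probabilistic or ergodic argument is needed.

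Concretely, I would check the three conditions of (\ref{a-n}) in turn. First, $a_n = n^p \to \infty$ follows immediately from $p > 1/2 > 0$. Second, I would rewrite
\[
\frac{a_n}{\sqrt{n\log n}} = \frac{n^{p-1/2}}{\sqrt{\log n}},
\]
and since $p - 1/2 > 0$, the polynomial factor $n^{p-1/2}$ dominates the logarithmic factor $\sqrt{\log n}$, so the quotient tends to infinity. Third, $a_n/n = n^{p-1} \to 0$ follows from $p < 1$. Having verified all three conditions, Theorem \ref{MDP} applies, and the resulting speed is
\[
n^{-1} a_n^2 = n^{-1} \cdot n^{2p} = n^{2p-1},
\]
with unchanged good rate function $J(x) = x^2/2$ on $\mathbb{R}$, which is exactly what the corollary claims.

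There is no genuine obstacle here: the corollary is a specialization made to emphasize the practically important family $a_n = n^p$ that interpolates between the central limit regime (the borderline $p = 1/2$) and the large deviation regime (the borderline $p = 1$), both of which are excluded by the strict inequalities. All the real work — the cumulant analysis built on the nearly-Markov structure recorded in Proposition \ref{Markov chain} and the invocation of the G\"artner--Ellis theorem — is contained in the proof of Theorem \ref{MDP}, and the corollary simply reads off a convenient consequence.
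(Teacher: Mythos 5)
Your proposal is correct and matches the paper's treatment: the corollary is stated there as an immediate application of Theorem \ref{MDP}, and your verification that $a_n = n^p$ with $p \in (1/2,1)$ satisfies the three conditions in (\ref{a-n}), together with the computation $n^{-1}a_n^2 = n^{2p-1}$, is exactly what is needed.
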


\section{Preliminary}

In this section, we recall some definitions and several arithmetic and statistical properties of the ECF expansion.
We first give an elementary arithmetic property of the ECF expansion in representation of real numbers, which is obtained by Hartono, Kraaikamp and Schweiger \cite{lesHKS02}.

\begin{proposition}(\cite[Theorem 2.1]{lesHKS02})\label{real number}
Let $x \in (0,1]$ be a real number. Then $x$ has a finite ECF expansion (i.e. $T_E^n x =0$ for some $n \geq 1$) if and only if $x$ is rational.
\end{proposition}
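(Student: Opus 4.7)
The plan is to prove both implications by direct analysis of the action of $T_E$ on rational numbers, using the observation that $T_E$ strictly reduces the numerator of a rational until it hits zero.

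For the easier direction (finite expansion $\Rightarrow$ rational): if $T_E^n x = 0$ for some $n \geq 1$, then by the uniqueness of the ECF representation in (\ref{ECF}), $x$ equals the finite expression $[[b_1(x), b_2(x), \ldots, b_n(x)]]$, which is built from finitely many positive integers via the rational operations of addition, division, and reciprocation, and is therefore rational.

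For the harder direction (rational $\Rightarrow$ finite expansion), I would argue that the numerator of $x$, written in a suitable form, strictly decreases under each application of $T_E$ until it becomes zero. Concretely, write $x = p/q$ with integers $0 < p \leq q$ (not necessarily coprime). Let $b := b_1(x) = [q/p] \geq 1$. By the definition of $T_E$,
\begin{equation*}
T_E x \;=\; \frac{1}{b} \left(\frac{q}{p} - b\right) \;=\; \frac{q - bp}{bp}.
\end{equation*}
Setting $p' := q - bp$ and $q' := bp$, the division algorithm gives $0 \leq p' < p$ and clearly $p' < p \leq bp = q'$. Hence either $p' = 0$ (in which case $T_E x = 0$ and we are done after one step), or $T_E x = p'/q'$ is again a fraction of the same shape $0 < p' \leq q'$, but with a strictly smaller positive numerator than $x$. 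Iterating, the numerators form a strictly decreasing sequence of non-negative integers, so after at most $p$ applications of $T_E$ one of them must equal zero, proving that $T_E^n x = 0$ for some $n \leq p$.

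The two directions together give the equivalence. I do not expect any real obstacle: the only slightly delicate point is checking that the new fraction $p'/q'$ still satisfies the hypothesis $0 < p' \leq q'$ used at the next step, which is immediate from $b \geq 1$. No ergodic or measure-theoretic input is needed; the statement is purely arithmetic, paralleling the classical proof that rationality of $x$ is equivalent to a terminating regular continued fraction expansion.
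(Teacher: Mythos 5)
Your proof is correct. Note that the paper itself does not prove this proposition: it is quoted verbatim from Hartono--Kraaikamp--Schweiger \cite[Theorem 2.1]{lesHKS02} and used as a black box, so there is no in-paper argument to compare against. Your termination argument is the standard one for such algorithms and it goes through: writing $x=p/q$ with $0<p\leq q$ and $b=[q/p]\geq 1$, the division algorithm gives $T_E x=(q-bp)/(bp)$ with new numerator $p'=q-bp$ satisfying $0\leq p'<p$ and (when $p'>0$) $p'<p\leq bp=q'$, so the hypotheses propagate and the strictly decreasing sequence of non-negative numerators forces $T_E^n x=0$ within at most $p$ steps. One small stylistic point on the easy direction: you do not need to invoke uniqueness of the ECF representation; the identity $1/x=b_1(x)+b_1(x)\,T_E x$, iterated $n$ times, directly exhibits $x$ as the finite continued fraction $[[b_1(x),\dots,b_n(x)]]$ once $T_E^n x=0$, and rationality is then immediate. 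This is essentially the argument one would expect in \cite{lesHKS02}, and there is no gap.
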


\begin{definition}
An $n$-block $(b_1, b_2, \cdots, b_n)$ is said to be admissible for ECF expansions if there exists $x \in (0,1]$ such that $b_j(x) = b_j$ for all $1 \leq j \leq n$. An infinite sequence $(b_1, b_2, \cdots, b_n, \cdots)$ is called an admissible sequence if $(b_1, b_2, \cdots, b_n)$ is admissible for any $n \geq 1$.
\end{definition}

The following proposition, due to Fan, Wang and Wu \cite{lesFWW07}, gives a characterization of all admissible sequences occurring in the ECF expansion.

\begin{proposition}(\cite[Proposition 2.2]{lesFWW07})\label{characterization}
A sequence of positive integers $(b_1, b_2, \cdots, b_n, \cdots)$ is admissible for ECF expansions if and only if for all $n \geq 1$,
\[
b_{n+1} \geq b_n.
\]
\end{proposition}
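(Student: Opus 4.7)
The plan is to prove the two implications separately, with sufficiency being the more involved direction. For necessity, a direct unpacking of the ECF map suffices: if $b_1(x) = j$, then $1/x = j + \{1/x\}$ with $\{1/x\} \in [0,1)$, so
\[ T_E x = \frac{\{1/x\}}{j} \in \left[0, \frac{1}{j}\right). \]
Whenever the expansion does not terminate, $T_E x > 0$, so $1/T_E x > j$ and hence $b_2(x) = b_1(T_E x) \geq j = b_1(x)$. Applying the same argument to the shifted point $T_E^{n-1}x$ and using $b_k(y) = b_1(T_E^{k-1}y)$ gives $b_{n+1}(x) \geq b_n(x)$ for every $n \geq 1$.

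For sufficiency, by the definition of an admissible infinite sequence it is enough to show that every non-decreasing finite block $(b_1, \ldots, b_n)$ is admissible, i.e., that the cylinder
\[ \Delta_n(b_1, \ldots, b_n) := \{x \in (0, 1] : b_i(x) = b_i \text{ for } 1 \leq i \leq n\} \]
is non-empty. I would prove by induction on $n$ the slightly stronger statement that each such cylinder is a non-degenerate half-open interval. The base case $\Delta_1(b_1) = (1/(b_1+1), 1/b_1]$ is immediate. For the inductive step, the restriction of $T_E$ to $\Delta_1(b_1)$ is the strictly decreasing M\"obius map $x \mapsto (1/x - b_1)/b_1$, a homeomorphism of $\Delta_1(b_1)$ onto $[0, 1/b_1)$ with continuous inverse $y \mapsto 1/(b_1(1+y))$. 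The hypothesis $b_2 \geq b_1$ gives $\Delta_{n-1}(b_2, \ldots, b_n) \subseteq \Delta_1(b_2) \subseteq (0, 1/b_1]$, so pulling back its intersection with $[0, 1/b_1)$ through $T_E|_{\Delta_1(b_1)}$ yields $\Delta_n(b_1, \ldots, b_n)$ as a non-degenerate subinterval.

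The one delicate point I anticipate is the boundary case $b_2 = b_1$: then $\Delta_1(b_2)$ contains the single point $1/b_1$ that lies outside the range of $T_E|_{\Delta_1(b_1)}$. This is exactly why the inductive statement tracks non-degeneracy --- removing at most one endpoint from a non-degenerate interval leaves a non-degenerate interval, so the preimage is still non-empty. Once non-emptiness of every finite cylinder is in hand, the equivalence follows immediately from the definition of admissibility.
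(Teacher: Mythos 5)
Your proof is correct, but note that the paper does not prove this proposition at all: it is imported by citation from Fan--Wang--Wu \cite[Proposition 2.2]{lesFWW07}, so there is no in-paper argument to compare against. Your two directions are both sound. For necessity, the key observation $T_E x \in [0, 1/b_1(x))$, hence $b_1(T_E x) \geq b_1(x)$ whenever $T_E x \neq 0$, is exactly the right pointwise bound, and it transfers to admissible sequences because each finite prefix is realized by some $x$ with enough digits defined. For sufficiency, the induction via the homeomorphism $T_E|_{\Delta_1(b_1)}: \left(\tfrac{1}{b_1+1}, \tfrac{1}{b_1}\right] \to \left[0, \tfrac{1}{b_1}\right)$ and the inclusion $\Delta_1(b_2) \subseteq (0, 1/b_1]$ for $b_2 \geq b_1$ is clean, and you correctly isolate the only delicate case $b_2 = b_1$, where the point $1/b_1$ lies outside the range; since $1/b_2$ can only be the right endpoint of $\Delta_{n-1}(b_2,\ldots,b_n)$, deleting it preserves non-degeneracy. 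One cosmetic remark: the inductive statement ``non-degenerate half-open interval'' is slightly stronger than what the step actually preserves (after removing the endpoint $1/b_1$ and pulling back, the interval may become open or change which end is closed), but all you use is non-degeneracy of an interval, so nothing breaks. Your construction is also consistent with the endpoint description of cylinders recorded in Proposition \ref{cylinder}.
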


\begin{definition}
Let $(b_1, b_2, \cdots, b_n)$ be an admissible sequence. We call the set
\begin{equation*}
B(b_1, b_2, \cdots, b_n) = \big\{x \in (0,1]: b_1(x)=b_1,b_2(x)=b_2,\cdots,b_n(x)=b_n\big\}
\end{equation*}
the $n$-th order cylinder. In other words, it is the set of points beginning with $(b_1,b_2 \cdots, b_n)$ in their ECF expansions.
\end{definition}

The following proposition is about the structure and the length of cylinders, which has been obtained in \cite{lesHKS02} (see also \cite{lesFWW07}).

\begin{proposition}\label{cylinder}
Let $(b_1, b_2, \cdots, b_n)$ be an admissible sequence. Then the $n$-th order cylinder
$B(b_1, b_2, \cdots, b_n)$ is an interval with two endpoints
\[
[[b_1,\cdots,b_{n-1}, b_n]]\ \ \ \  \text{and}\ \ \ \ [[b_1,\cdots,b_{n-1}, b_n+1]].
\]
Hence that for all $n \geq 1$,
\begin{equation}\label{cylinder length}
\mathbf{P}(B(b_1, b_2, \cdots, b_n))= \frac{\prod_{i=1}^{n-1}b_i}{Q_n(Q_n+Q_{n-1})},
\end{equation}
where the quantity $Q_n$ satisfies the recursive formula $Q_n = b_n Q_{n-1}+ b_{n-1}Q_{n-2}$ under the conventions $Q_{-1} =0$ and $Q_0 =1$.
\end{proposition}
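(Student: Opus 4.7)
The plan is to follow the classical convergents-cylinder strategy used for regular continued fractions, adapted to the fact that the ECF (\ref{ECF}) carries numerators $b_{k-1}$ at level $k$. First I would introduce, alongside $Q_n$, numerators $P_n$ satisfying the same recursion $P_n = b_n P_{n-1} + b_{n-1} P_{n-2}$ with initial values $P_0 = 0$ and $P_1 = 1$, and check by induction on $n$ that the truncated expansion satisfies $[[b_1,\ldots,b_n]] = P_n/Q_n$. A parallel induction on the same recursion yields the determinant identity
\[
P_{n-1} Q_n - P_n Q_{n-1} = (-1)^{n}\prod_{i=1}^{n-1} b_i,
\]
where the $\prod b_i$ factor (absent in the classical case) is produced by the extra $b_{n-1}$ coefficient in the recursion.

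Next I would iterate the identity $\xi = 1/(b_1(\xi)(1 + T_E\xi))$, which follows directly from the definition of $T_E$, to obtain, for every $x \in B(b_1,\ldots,b_n)$,
\[
x = [[b_1,\ldots,b_{n-1},\, b_n(1 + T_E^n x)]].
\]
Substituting $b_n(1+T_E^n x)$ for $b_n$ in the continuant recursion (and noting that $P_{n-1}, P_{n-2}, Q_{n-1}, Q_{n-2}$ are unaffected) rewrites this as the M\"obius form
\[
x = \frac{P_n + b_n T_E^n x \cdot P_{n-1}}{Q_n + b_n T_E^n x \cdot Q_{n-1}}.
\]
A short calculation from $T_E z = (1/z - b_1(z))/b_1(z)$ shows that $T_E$ maps $\{z : b_1(z) = k\}$ bijectively onto $[0, 1/k)$; iterating and using admissibility $b_n \geq b_{n-1}$, $T_E^n$ is a bijection from $B(b_1,\ldots,b_n)$ onto $[0, 1/b_n)$, so the parameter $t := b_n T_E^n x$ sweeps the interval $[0,1)$.

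Finally I combine the two ingredients. The determinant identity shows the M\"obius map $t \mapsto (P_n + tP_{n-1})/(Q_n + tQ_{n-1})$ has a nonzero, constant-sign derivative on $[0,1]$, hence is strictly monotonic and continuous, so $B(b_1,\ldots,b_n)$ is an interval with endpoints at $t=0$ and $t=1$, namely $P_n/Q_n$ and $(P_n+P_{n-1})/(Q_n+Q_{n-1})$. The former equals $[[b_1,\ldots,b_n]]$ by step one, and the latter equals $[[b_1,\ldots,b_{n-1},b_n+1]]$ because substituting $b_n \mapsto b_n+1$ in the continuant recursion yields exactly $P_n + P_{n-1}$ and $Q_n + Q_{n-1}$. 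Subtracting the two endpoints and applying the determinant identity produces
\[
\left|\frac{P_n}{Q_n} - \frac{P_n + P_{n-1}}{Q_n + Q_{n-1}}\right| = \frac{|P_{n-1}Q_n - P_n Q_{n-1}|}{Q_n(Q_n + Q_{n-1})} = \frac{\prod_{i=1}^{n-1} b_i}{Q_n(Q_n + Q_{n-1})},
\]
which is (\ref{cylinder length}) since $\mathbf{P}$ is Lebesgue measure. The main technical point is book-keeping rather than conceptual depth: the extra $b_{n-1}$ in the ECF recursion propagates through both the determinant identity and the M\"obius conjugation, and one must carry the $\prod b_i$ factor carefully through each step.
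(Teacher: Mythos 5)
The paper itself does not prove this proposition; it is quoted verbatim from Hartono--Kraaikamp--Schweiger \cite{lesHKS02} and Fan--Wang--Wu \cite{lesFWW07}, so there is no in-paper argument to compare against. Your proposal supplies the standard proof one would find in those references, and it is correct: the continuant recursion $P_n=b_nP_{n-1}+b_{n-1}P_{n-2}$, the determinant identity $P_{n-1}Q_n-P_nQ_{n-1}=(-1)^n\prod_{i=1}^{n-1}b_i$, the conjugation $x=(P_n+b_nT_E^nx\,P_{n-1})/(Q_n+b_nT_E^nx\,Q_{n-1})$ coming from $x=1/(b_1(x)(1+T_Ex))$, and the final subtraction of the two endpoints all check out (e.g.\ for $n=1$ one recovers $\mathbf{P}(b_1=j)=1/(j(j+1))$, consistent with (\ref{initial distribution})). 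The only imprecision is cosmetic: the parameter $t=b_nT_E^nx$ need not sweep all of $[0,1)$ but only an interval with endpoints $0$ and $1$ (e.g.\ $B(1,1)=(1/2,2/3)$ is open, since $[[1,1]]=1/2$ has canonical expansion $[[2]]$ and so lies outside the cylinder); this affects only whether the endpoints are attained, not the interval structure or the Lebesgue measure, so the conclusion (\ref{cylinder length}) stands.
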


\begin{remark}\label{NMC}
The Markov property states that the distribution of the forthcoming state depends only on the current state and does not depend on the previous ones.
So the partial quotient sequence $\{b_n: n \geq 1\}$ does not form a homogeneous Markov chain. In fact, by (\ref{cylinder length}), we obtain that $\mathbf{P} (B(1,1,2)) = 1/35$, $\mathbf{P} (B(1,2,2)) = 1/44$, $\mathbf{P} (B(2,2,2)) = 1/88$, $\mathbf{P} (B(1,1,2,2)) = 1/133$,
$\mathbf{P} (B(1,2,2,2)) = 1/165$, $\mathbf{P} (B(2,2,2,2)) =1/330$ and hence that
\[
\mathbf{P}(b_4= 2~|~b_3=2, b_2=1, b_1=1) = 5/19 \neq 972/3667 = \mathbf{P}(b_4= 2~|~b_3=2).
\]
\end{remark}

Although the partial quotient sequence $\{b_n: n \geq 1\}$ does not form a homogeneous Markov chain, we have

\begin{proposition} \label{Markov chain}
Let $\{b_n: n \geq 1\}$ be the partial quotient sequence of the ECF expansion. Then for any $k \geq j \geq 1$,
\begin{equation}\label{initial distribution}
\mathbf{P}(b_1 = j) = \frac{1}{j(j+1)}
\end{equation}
and the conditional probabilities
\begin{equation}\label{transition distribution}
\frac{j}{k(k+2)} \leq \mathbf{P}(b_{n+1}= k~|~b_n=j) \leq \frac{j+1}{k(k+1)}\ \ \ \text{for all}\ n \geq 1.
\end{equation}
\end{proposition}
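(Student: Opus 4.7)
The first identity (\ref{initial distribution}) is immediate from $b_1(x) = [1/x]$: the level set $\{x\in(0,1] : b_1(x) = j\}$ is the interval $(1/(j+1), 1/j]$, of Lebesgue length $1/(j(j+1))$.

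For (\ref{transition distribution}), the plan is first to bound the stronger conditional probability $\mathbf{P}(b_{n+1}=k \mid b_1=c_1, \ldots, b_{n-1}=c_{n-1}, b_n=j)$ uniformly in the admissible past $(c_1,\ldots,c_{n-1})$, and then descend to the conditioning on $b_n=j$ alone by averaging. By Proposition \ref{cylinder} each such conditional probability equals the ratio of cylinder lengths $\mathbf{P}(B(c_1,\ldots,c_{n-1},j,k))/\mathbf{P}(B(c_1,\ldots,c_{n-1},j))$; substituting the formula (\ref{cylinder length}) and simplifying via $Q_{n+1}=kQ_n+jQ_{n-1}$ yields
\begin{equation*}
\frac{j\,Q_n(Q_n+Q_{n-1})}{Q_{n+1}(Q_{n+1}+Q_n)} \;=\; \frac{j(1+s)}{(k+js)(k+js+1)},
\end{equation*}
where $s := Q_{n-1}/Q_n$. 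The recursion $Q_n = jQ_{n-1}+c_{n-1}Q_{n-2}$ together with $Q_{-1}=0$, $Q_0=1$, and $c_{n-1}\geq 1$ forces $s \in [0, 1/j]$ (with equality when $n=1$).

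It then suffices to establish, for $k \geq j \geq 1$ and $u := js \in [0,1]$, the two polynomial inequalities
\begin{equation*}
(j+u)\,k(k+1) \;\leq\; (j+1)(k+u)(k+u+1) \qquad\text{and}\qquad (j+u)\,k(k+2) \;\geq\; j(k+u)(k+u+1).
\end{equation*}
The upper bound follows at once from $j+u \leq j+1$ and $k(k+1) \leq (k+u)(k+u+1)$. For the lower bound, the difference of the two sides is a concave quadratic in $u$ (the coefficient of $u^2$ is $-j$) whose endpoint values are $jk$ at $u=0$ and $(k+2)(k-j)$ at $u=1$; both are non-negative, the second because $k \geq j$, so concavity delivers non-negativity throughout $[0,1]$.

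Finally, since the uniform bounds depend only on $j$ and $k$ on the event $\{b_n=j\}$, they transfer to $\mathbf{P}(b_{n+1}=k\mid b_n=j)$ by the tower property. The main obstacle I foresee is the clean algebraic reduction to $j(1+s)/((k+js)(k+js+1))$ and the bookkeeping needed for the constraint $s \leq 1/j$; this residual dependence on the remote past is precisely the obstruction to the Markov property noted in Remark \ref{NMC}.
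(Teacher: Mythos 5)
Your proof is correct and follows essentially the same route as the paper: both compute the ratio of cylinder measures via Proposition \ref{cylinder}, reduce it to $\frac{j(1+s)}{(k+js)(k+js+1)}$ with $js\in[0,1]$ forced by the recursion for $Q_n$, bound this quantity uniformly in the past, and then sum over admissible histories. Your concavity-in-$u$ argument for the lower bound is a welcome explicit justification of a step the paper states without detail (it is exactly where the hypothesis $k\geq j$ enters).
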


\begin{proof}
The equation (\ref{initial distribution}) is obvious by taking $n=1$ in (\ref{cylinder length}). Now we prove the inequalities (\ref{transition distribution}). For two integers $1 \leq a \leq b$ and real number $0 \leq y <1$, we define
\[
\Phi(a,b,y)= \frac{a(1+y)}{(b+ay)(b+1+ay)}.
\]
For any $n \in \mathbb{N}$ and admissible sequence $(b_1,\cdots, b_n,d_{n+1})$, by (\ref{cylinder length}),
we deduce that
\begin{equation}\label{lei Markov}
\frac{\mathbf{P}(B(b_1,\cdots, b_n,b_{n+1}))}{\mathbf{P}(B(b_1, \cdots, b_n))} = \frac{b_nQ_n(Q_n + Q_{n-1})}{Q_{n+1}(Q_{n+1} + Q_n)} = \Phi(b_n, b_{n+1}, y_n),
\end{equation}
where $y_n = Q_{n-1}/Q_n \geq 0$ and the last equality is from the recursive formula $Q_n = b_n Q_{n-1}+ b_{n-1}Q_{n-2}$. This recursive formula of $Q_n$ indicates that $0 \leq b_ny_n \leq 1$. So,
\[
\Phi(b_n, b_{n+1}, y_n) \leq \frac{b_n+1}{b_{n+1}(b_{n+1} +1)}.
\]
Since $b_{n+1} \geq b_n$ and $0 \leq b_ny_n \leq 1$, we obtain that
\[
\Phi(b_n, b_{n+1}, y_n) \geq \frac{b_n}{b_{n+1}(b_{n+1} +2)}.
\]
Combing these with (\ref{lei Markov}), we have that
\[
\frac{b_n}{b_{n+1}(b_{n+1} +2)} \leq \frac{\mathbf{P}(B(b_1,\cdots, b_n,b_{n+1}))}{\mathbf{P}(B(b_1, \cdots, b_n))} \leq \frac{b_n+1}{b_{n+1}(b_{n+1} +1)}.
\]
Taking summations for all admissible sequences $(b_1,\cdots,b_{n})$, the inequalities (\ref{transition distribution}) are established.
\end{proof}

\section{Proofs of main results}
In this section, we will give the proof of Theorems \ref{LDP} and \ref{MDP}.
To do this we first give a simple but useful lemma, which states roughly that the rate of growth for a finite
sum of sequences equals the maximal rate of growth of the summands.

\begin{lemma}\label{basic}
Let $m \geq 2$ be an integer and $\{c_n\}_{n \geq 1}$ be a sequence satisfying $c_n \to \infty$ as $n \to \infty$. Then
\[
\limsup_{n \to \infty} \frac{1}{c_n} \log \left(\sum_{k=1}^m d^{(k)}_n\right) = \max_{1 \leq k \leq n} \limsup_{n \to \infty} \frac{1}{c_n} \log d^{(k)}_n,
\]
where $\{d^{(1)}_n\}_{n \geq 1}, \cdots, \{d^{(m)}_n\}_{n \geq 1}$ are non-negative sequences.
\end{lemma}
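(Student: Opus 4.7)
The plan is to establish the equality by a two-sided estimate based on the elementary sandwich
\[
\max_{1 \leq k \leq m} d^{(k)}_n \;\leq\; \sum_{k=1}^m d^{(k)}_n \;\leq\; m \cdot \max_{1 \leq k \leq m} d^{(k)}_n,
\]
together with the basic fact that $\limsup$ commutes with finite maxima.

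For the ``$\geq$'' direction, I would fix an arbitrary index $k_0 \in \{1,\ldots,m\}$, use the trivial bound $\sum_{k=1}^m d^{(k)}_n \geq d^{(k_0)}_n$, apply $\log$ (monotone) and divide by $c_n$, which is eventually positive since $c_n \to \infty$. Taking $\limsup_{n \to \infty}$ on both sides yields
\[
\limsup_{n \to \infty} \frac{1}{c_n}\log\sum_{k=1}^m d^{(k)}_n \;\geq\; \limsup_{n \to \infty} \frac{1}{c_n}\log d^{(k_0)}_n,
\]
and since $k_0$ was arbitrary, I take the maximum over $k_0 \in \{1,\ldots,m\}$ to conclude.

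For the ``$\leq$'' direction, I would apply the upper sandwich bound, take $\log$, and divide by $c_n$ to get
\[
\frac{1}{c_n}\log\sum_{k=1}^m d^{(k)}_n \;\leq\; \frac{\log m}{c_n} \;+\; \max_{1 \leq k \leq m} \frac{1}{c_n}\log d^{(k)}_n.
\]
Because $c_n \to \infty$, the term $\log m / c_n \to 0$. Then I invoke the identity $\limsup_n \max_{1 \leq k \leq m} a^{(k)}_n = \max_{1 \leq k \leq m} \limsup_n a^{(k)}_n$, valid for any finite collection of sequences (one easily checks both inequalities by comparing with individual subsequences), applied to $a^{(k)}_n = \tfrac{1}{c_n}\log d^{(k)}_n$. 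This yields the matching upper bound.

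I do not anticipate any real obstacle: the argument is a routine manipulation of $\limsup$'s. The only minor bookkeeping concerns the possibility that some $d^{(k)}_n$ vanishes, in which case $\log d^{(k)}_n = -\infty$; but the inequalities remain valid in $[-\infty,\infty]$ under the usual conventions, and both sides of the claimed equality admit $-\infty$ as a legitimate value, so nothing breaks.
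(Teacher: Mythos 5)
Your proposal is correct and follows essentially the same route as the paper: both rest on the sandwich $\max_k d^{(k)}_n \le \sum_k d^{(k)}_n \le m\max_k d^{(k)}_n$ (equivalently, the additive bound by $\log m$ after taking logarithms), the fact that $\log m / c_n \to 0$, and the commutation of $\limsup$ with a finite maximum. Your extra care about eventual positivity of $c_n$ and the $-\infty$ convention when some $d^{(k)}_n$ vanishes is sound and slightly more thorough than the paper's one-line argument (and note the index in the maxima should read $1\le k\le m$, not $1\le k\le n$, a typo inherited from the statement).
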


\begin{proof}
Let $m \geq 2$ be an integer. For any $n \geq 1$, since
\[
0 \leq \log \left(\sum_{k=1}^m d^{(k)}_n\right)- \max_{1 \leq k \leq n}\log d^{(k)}_n \leq \log m,
\]
we deduce that
\[
\limsup_{n \to \infty} \frac{1}{c_n} \log \left(\sum_{k=1}^m d^{(k)}_n\right) = \limsup_{n \to \infty} \frac{1}{c_n} \max_{1 \leq k \leq n}\log d^{(k)}_n
= \max_{1 \leq k \leq n} \limsup_{n \to \infty} \frac{1}{c_n} \log d^{(k)}_n.
\]
\end{proof}

\begin{lemma}\label{jichu}
Let $\theta <1$ be a real number. Then for any $j>1$, we have
\[
\sum_{k=j}^{\infty} \frac{j}{k(k+2)} \left(\frac{k}{j}\right)^\theta \geq \frac{j}{j+2} \cdot \frac{1}{1-\theta}
\]
and
\[
\sum_{k=j}^{\infty} \frac{j+1}{k(k+1)} \left(\frac{k}{j}\right)^\theta \leq  \left(1+ \frac{1}{j}\right)\cdot \left(1-\frac{1}{j}\right)^{\theta -1}\cdot \frac{1}{1-\theta}.
\]
\end{lemma}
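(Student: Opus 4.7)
The plan is to reduce both inequalities to elementary estimates on $\sum_{k\ge j} k^{\theta-2}$ via the monotonicity of $x\mapsto x^{\theta-2}$, together with two simple algebraic simplifications of the weights $\tfrac{j}{k(k+2)}$ and $\tfrac{j+1}{k(k+1)}$.

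First I would record the basic integral comparison. Since $\theta<1$ one has $\theta-2<-1$, so $f(x)=x^{\theta-2}$ is strictly decreasing and integrable at infinity on $(0,\infty)$. The usual monotone integral test yields
\[
\frac{j^{\theta-1}}{1-\theta}=\int_j^{\infty}x^{\theta-2}\,dx\;\le\;\sum_{k=j}^{\infty}k^{\theta-2}\;\le\;\int_{j-1}^{\infty}x^{\theta-2}\,dx=\frac{(j-1)^{\theta-1}}{1-\theta},
\]
which is legitimate because the hypothesis $j>1$ forces $j-1\ge 1>0$.

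For the lower bound I would use the elementary inequality $(j+2)k\ge j(k+2)$, valid for $k\ge j$, which rearranges to $\tfrac{j}{k(k+2)}\ge\tfrac{j^{2}}{(j+2)k^{2}}$. Factoring out $j^{-\theta}$ gives
\[
\sum_{k=j}^{\infty}\frac{j}{k(k+2)}\left(\frac{k}{j}\right)^{\theta}\;\ge\;\frac{j^{2-\theta}}{j+2}\sum_{k=j}^{\infty}k^{\theta-2}\;\ge\;\frac{j^{2-\theta}}{j+2}\cdot\frac{j^{\theta-1}}{1-\theta}=\frac{j}{j+2}\cdot\frac{1}{1-\theta}.
\]
For the upper bound the crude estimate $\tfrac{1}{k(k+1)}\le\tfrac{1}{k^{2}}$ similarly produces
\[
\sum_{k=j}^{\infty}\frac{j+1}{k(k+1)}\left(\frac{k}{j}\right)^{\theta}\;\le\;\frac{j+1}{j^{\theta}}\sum_{k=j}^{\infty}k^{\theta-2}\;\le\;\frac{j+1}{j^{\theta}}\cdot\frac{(j-1)^{\theta-1}}{1-\theta},
\]
and regrouping the factors as $\tfrac{j+1}{j}=1+\tfrac1j$ and $\tfrac{(j-1)^{\theta-1}}{j^{\theta-1}}=(1-\tfrac1j)^{\theta-1}$ gives exactly the right-hand side of the claimed inequality.

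I do not expect any real obstacle here; the main thing to keep straight is the sign of $\theta-1$. Because $\theta-1<0$, the quantity $(j-1)^{\theta-1}$ is \emph{larger} than $j^{\theta-1}$, so extending the integral's lower endpoint from $j$ to $j-1$ does indeed give an upper (rather than lower) bound on $\sum_{k\ge j}k^{\theta-2}$, which is what is needed on the right-hand side and would be the only place where a sign error could creep in.
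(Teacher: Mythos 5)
Your proposal is correct and follows essentially the same route as the paper: the integral comparison $\int_j^\infty x^{\theta-2}\,dx \le \sum_{k\ge j}k^{\theta-2}\le \int_{j-1}^\infty x^{\theta-2}\,dx$ combined with the pointwise bounds $\frac{j}{k(k+2)}\ge \frac{j}{j+2}\cdot\frac{j}{k^2}$ (the paper writes this as $\frac{k}{k+2}\ge\frac{j}{j+2}$) and $\frac{1}{k(k+1)}\le\frac{1}{k^2}$. The sign bookkeeping for $\theta-1<0$ is handled correctly, so there is nothing to add.
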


\begin{proof}
Let $\theta <1$ be a real number. For any $j > 1$, we obtain that
\begin{equation}\label{xiaoyu1}
\sum_{k =j}^{\infty}\frac{1}{k^{2-\theta}} \geq \int_j^{\infty} \frac{1}{x^{2-\theta}}dx = \frac{1}{1-\theta}\cdot j^{\theta-1}.
\end{equation}
Since
\begin{equation*}
\frac{k^{\theta}}{k(k+2)} = \frac{1}{k^{2-\theta}} \cdot\frac{k}{k+2}\ \ \ \text{and}\ \ \ \frac{k}{k+2} \geq \frac{j}{j+2} \ \ \text{for any}\ k \geq j,
\end{equation*}
we have that
\[
\sum_{k=j}^{\infty}\frac{j}{k(k+2)}\left(\frac{k}{j}\right)^\theta = \frac{j}{j^{\theta}}\cdot \sum_{k =j}^{\infty}\frac{k^{\theta}}{k(k+2)} \geq \frac{j}{j^{\theta}} \cdot \frac{j}{j+2} \cdot \sum_{k =j}^{\infty} \frac{1}{k^{2-\theta}}.
\]
Combing this with (\ref{xiaoyu1}), we deduce that
\[
\sum_{k=j}^{\infty}\frac{j}{k(k+2)}\left(\frac{k}{j}\right)^\theta \geq \frac{j}{j^{\theta}} \cdot \frac{j}{j+2} \cdot \frac{1}{1-\theta}\cdot j^{\theta-1} = \frac{j}{j+2} \cdot \frac{1}{1-\theta}.
\]

Notice that for any $j >1$,
\begin{equation*}\label{xiaoyu}
\sum_{k =j}^{\infty}\frac{k^{\theta}}{k(k+1)} \leq \sum_{k =j}^{\infty}\frac{1}{k^{2-\theta}}
\leq \int_{j-1}^{\infty} \frac{1}{x^{2-\theta}}dx =  \frac{1}{1-\theta}\cdot(j-1)^{\theta-1},
\end{equation*}
we know that
\[
\sum_{k=j}^{\infty} \frac{j+1}{k(k+1)} \left(\frac{k}{j}\right)^\theta = \frac{j+1}{j^\theta} \cdot \sum_{k =j}^{\infty}\frac{k^{\theta}}{k(k+1)} \leq \left(1+ \frac{1}{j}\right)\cdot \left(1-\frac{1}{j}\right)^{\theta -1}\cdot \frac{1}{1-\theta}.
\]
\end{proof}

\subsection{Proof of large deviation principle}

For any $m,n \in \mathbb{N}$, we define
\[
\Sigma_{n,m} = \left\{(b_1,\cdots,b_{n-1},m): (b_1,\cdots,b_{n-1},m)\ \text{is admissible}\right\}
\]
and
\[
\Sigma_{n,\leq m} = \left\{(b_1,\cdots,b_{n-1},j): (b_1,\cdots,b_{n-1},j)\ \text{is admissible for all}\ 1 \leq j \leq m\right\}.
\]
We believe that the following result is known in the combinatorial theory; while, to our knowledge, we cannot find its proof in any book about combination. For the completeness of this paper, we give its proof here using the mathematical induction.

\begin{lemma}\label{Numbers}
For any $m,n \in \mathbb{N}$,
\[
\sharp \Sigma_{n,m} = C^{m-1}_{n+m-2}\ \ \ \ \ \text{and}\ \ \ \ \ \sharp \Sigma_{n,\leq m} = C^{m-1}_{n+m-1},
\]
where $\sharp$ denotes the number of elements of a finite set and $C^m_n$ means the number of possible combinations of $m$ objects from a set of $n$ objects.
\end{lemma}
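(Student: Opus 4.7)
The plan is to prove the two formulas simultaneously by induction on $n+m$. The key observation is that both cardinalities satisfy Pascal-type recursions obtained by decomposing admissible sequences according to their last entries, and together with simple base cases these recursions pin down the claimed binomial coefficients uniquely.

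First I would establish two decomposition identities. Partitioning $\Sigma_{n,m}$ by the value of $b_{n-1}$ (using Proposition \ref{characterization} so that admissibility reduces to the non-decreasing condition on the prefix) gives, for $n \geq 2$ and $m \geq 2$,
\[
\sharp \Sigma_{n,m} = \sharp \Sigma_{n-1, m} + \sharp \Sigma_{n-1, \leq m-1},
\]
since either $b_{n-1}=m$ (prefix in $\Sigma_{n-1,m}$) or $b_{n-1}\leq m-1$ (prefix in $\Sigma_{n-1,\leq m-1}$, freely extendable by $m$). Similarly, splitting $\Sigma_{n,\leq m}$ according to whether $b_n=m$ or $b_n\leq m-1$ yields
\[
\sharp \Sigma_{n,\leq m} = \sharp \Sigma_{n,m} + \sharp \Sigma_{n,\leq m-1}.
\]

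Next I would dispose of the base cases. When $n=1$, clearly $\Sigma_{1,m}=\{(m)\}$ and $\Sigma_{1,\leq m}=\{(1),(2),\ldots,(m)\}$, matching $C^{m-1}_{m-1}=1$ and $C^{m-1}_{m}=m$. When $m=1$, Proposition \ref{characterization} forces every entry to equal $1$, so $\sharp\Sigma_{n,1}=\sharp\Sigma_{n,\leq 1}=1=C^{0}_{n-1}=C^{0}_{n}$.

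For the inductive step, assuming both formulas hold for all smaller values of $n+m$, I would substitute into the first recursion and apply Pascal's identity to obtain
\[
\sharp \Sigma_{n,m} = C^{m-1}_{n+m-3} + C^{m-2}_{n+m-3} = C^{m-1}_{n+m-2},
\]
and then feed this into the second recursion together with the inductive value of $\sharp\Sigma_{n,\leq m-1}$ to get $\sharp\Sigma_{n,\leq m}=C^{m-1}_{n+m-1}$. The argument is essentially mechanical; the only point deserving care is that the two recursions must be threaded together (via joint induction on $n+m$ rather than on $n$ or $m$ alone) so that each step only invokes cases already handled, so there is no real obstacle beyond careful bookkeeping of indices.
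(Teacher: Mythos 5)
Your proof is correct, and it takes a cleaner route than the paper's. The paper proves the first formula by induction on $m$ alone: it decomposes $\Sigma_{n,k+1}$ as the full sum $\sum_{j=1}^{k+1}\sharp\Sigma_{n-1,j}$ over all possible values of $b_{n-1}$, forms the difference $\sharp\Sigma_{n,k+1}-\sharp\Sigma_{n-1,k+1}$, collapses the resulting sum of binomials via repeated applications of $C^j_N+C^{j+1}_N=C^{j+1}_{N+1}$, and then telescopes once more over $n$; the second formula is obtained afterwards by summing the first over $j\le m$ and collapsing again (a hockey-stick computation). You instead set up the two-term recursions $\sharp\Sigma_{n,m}=\sharp\Sigma_{n-1,m}+\sharp\Sigma_{n-1,\leq m-1}$ and $\sharp\Sigma_{n,\leq m}=\sharp\Sigma_{n,m}+\sharp\Sigma_{n,\leq m-1}$ and run a joint induction on $n+m$, so that each step is a single application of Pascal's identity. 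The combinatorial content is the same (both decompositions ultimately condition on the value of an entry of a non-decreasing sequence), but your version avoids the nested telescoping chains and keeps the two formulas symmetric; the paper's version has the mild advantage of establishing the first formula self-contained before touching the second. Your bookkeeping checks out: in the step for $\sharp\Sigma_{n,m}$ both terms on the right have strictly smaller $n+m$, and in the step for $\sharp\Sigma_{n,\leq m}$ the term $\sharp\Sigma_{n,m}$ is the one just established while $\sharp\Sigma_{n,\leq m-1}$ has smaller $n+m$, so the induction is well-founded.
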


\begin{proof}
We first prove the first formula by induction and then prove the second formula.
In view of Proposition \ref{characterization}, we know that the set $\Sigma_{n,1}$ only has one element as $(1,1,\cdots,1)$. So, $\sharp \Sigma_{n,1} = 1$. It is easy to check that the set $\Sigma_{n,2}$ has $n$ elements like $(1,\cdots,1,2), (1,\cdots,2,2), \cdots, (2,\cdots,2,2)$. Thus, $\sharp \Sigma_{n,2} =n$. These indicate that the formula $\sharp \Sigma_{n,m} = C^{m-1}_{n+m-2}$ is true for $m=1$ and $2$. Now we assume that this formula is true for all $m \leq k$, i.e.,
\[
\sharp \Sigma_{n,1} = C^0_{n-1}, \sharp \Sigma_{n,2} = C^1_{n}, \sharp \Sigma_{n,3} = C^2_{n+1}, \cdots, \sharp \Sigma_{n,k} = C^{k-1}_{n+k-2}.
\]
For $m = k+1$, notice that
\[
\sharp \Sigma_{n,k+1} = \sharp \Sigma_{n-1,1} + \sharp \Sigma_{n-1,2} + \sharp \Sigma_{n-1,3} + \cdots + \sharp \Sigma_{n-1,k} + \sharp \Sigma_{n-1,k+1},
\]
we obtain that
\begin{align}\label{die}
\sharp \Sigma_{n,k+1} - \sharp \Sigma_{n-1,k+1} &= \sharp \Sigma_{n-1,1}+\sharp \Sigma_{n-1,2} + \sharp \Sigma_{n-1,3} + \cdots + \sharp \Sigma_{n-1,k}\notag \\
& = C^0_{n-1} + C^1_{n} + C^2_{n+1} + \cdots + C^{k-1}_{n+k-3} \notag \\
& = C^0_{n} + C^1_{n} + C^2_{n+1} + \cdots + C^{k-1}_{n+k-3} \notag \\
& = C^1_{n+1}+ C^2_{n+1} + \cdots + C^{k-1}_{n+k-3}\notag \\
& = \cdots \notag \\
&= C^{k-1}_{n+k-2},
\end{align}
where third equality follows from  $C^0_{n-1} = C^0_{n} =1$ and the forth equality is from the basic combination equation $C^j_{n} + C^{j+1}_{n} = C^{j+1}_{n+1} $. Being similar to (\ref{die}), we deduce that
\[
\sharp \Sigma_{n-1,k+1} - \sharp \Sigma_{n-2,k+1} =  C^{k-1}_{n+k-3}, \cdots, \sharp \Sigma_{2,k+1} - \sharp \Sigma_{1,k+1} = C^{k-1}_{k}.
\]
Since $C^j_n = C^{n-j}_n$, we have that
\begin{align*}
\sharp \Sigma_{n,k+1} &= C^{k-1}_{n+k-2} + C^{k-1}_{n+k-3}+ \cdots + C^{k-1}_{k+1}+ C^{k-1}_{k} + \sharp \Sigma_{1,k+1} \\
& = C^{n-1}_{n+k-2} + C^{n-2}_{n+k-3}+ \cdots + C^{2}_{k+1}+ C^{1}_{k} + 1\\
& = C^{n-1}_{n+k-2} + C^{n-2}_{n+k-3}+ \cdots + C^{2}_{k+1} + C^{1}_{k+1}\\
& = \cdots\\
&= C^{n-1}_{n+k-1} = C^{k}_{n+k-1},
\end{align*}
where the second equality follows from $\Sigma_{1,k+1} = \{(k+1)\}$ with only one element and the third equality is also from the basic combination equation $C^j_{n} + C^{j+1}_{n} = C^{j+1}_{n+1} $. This is to say, the formula $\sharp \Sigma_{n,m} = C^{m-1}_{n+m-2}$ is also true for $m=k+1$. By induction, we obtain the desired result. Now we are ready to prove $\sharp \Sigma_{n,\leq m} = C^{m-1}_{n+m-1}$. In fact, it follows from the first result that
\begin{align*}
\sharp \Sigma_{n,\leq m} &= \sharp \Sigma_{n,1} + \sharp \Sigma_{n,2} + \sharp \Sigma_{n,3}+ \cdots + \sharp \Sigma_{n,m}\\
& = C^0_n + C^1_{n+1} + C^2_{n+2} + \cdots + C^{m-1}_{n+m-2} \\
&=  C^1_{n+1} + C^2_{n+2} + \cdots + C^{m-1}_{n+m-2}\\
& = \cdots \\
&= C^{m-1}_{n+m-1}.
\end{align*}
\end{proof}

\begin{lemma}\label{number}
Let $\{b_n: n \geq 1\}$ be the partial quotient sequence of the ECF expansion. Then for any $n \in \mathbb{N}$ and $j >1$,
\[
 \mathbf{P}(b_n =1)\geq A^{-1}\cdot \left(\frac{\sqrt{5}+1}{2}\right)^{-2n}\ \text{and}\ \ \ \ \mathbf{P}(b_n \leq j) \leq A\cdot C^{j-1}_{n+j-1} \cdot  \left(\frac{\sqrt{5}+1}{2}\right)^{-2n},
\]
where $A >1$ is an absolute constant and the combinatorial number $C^m_n$ is as defined in Lemma \ref{Numbers}.
\end{lemma}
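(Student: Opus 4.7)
The plan is to reduce both estimates to the cylinder-length formula of Proposition~\ref{cylinder} together with the combinatorial count of Lemma~\ref{Numbers}. The heart of the argument is a single universal bound:
\[
\mathbf{P}(B(b_1,\ldots,b_n)) \leq \frac{1}{F_{n+1} F_{n+2}}
\]
for every admissible $(b_1,\ldots,b_n)$, where $F_k$ is the Fibonacci sequence with $F_1=F_2=1$. Granting this, the lower bound on $\mathbf{P}(b_n=1)$ is immediate, because the monotonicity in Proposition~\ref{characterization} forces the only admissible sequence with $b_n=1$ to be $(1,\ldots,1)$; for that sequence the recursion in Proposition~\ref{cylinder} degenerates to the Fibonacci recurrence and produces an equality. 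The upper bound on $\mathbf{P}(b_n\leq j)$ is then obtained by summing the universal cylinder bound over the $\sharp\Sigma_{n,\leq j}=C^{j-1}_{n+j-1}$ admissible sequences. In both cases, Binet's estimate $F_k\leq C\phi^k$ with $\phi=(\sqrt 5+1)/2$ converts $F_{n+1}F_{n+2}$ into $A\,\phi^{2n}$ for an absolute constant $A$.

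Via Proposition~\ref{cylinder}, the universal cylinder bound amounts to the inequality
\[
Q_n(Q_n+Q_{n-1}) \geq F_{n+1}\,F_{n+2}\,\prod_{i=1}^{n-1} b_i.
\]
To prove this, I would establish two auxiliary estimates by simultaneous induction on $n$, using the recursion $Q_n=b_nQ_{n-1}+b_{n-1}Q_{n-2}$ and the admissibility condition $b_{n-1}\leq b_n$:
\[
\mathrm{(a)}\ \ Q_n\geq F_{n+1}\sqrt{\textstyle\prod_{i=1}^{n-1} b_i},\qquad \mathrm{(b)}\ \ Q_n Q_{n-1}\geq F_n F_{n+1}\prod_{i=1}^{n-1} b_i.
\]
Squaring (a) and adding to (b) uses $F_{n+1}^2+F_n F_{n+1}=F_{n+1}F_{n+2}$ to yield exactly the combined bound.

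The inductive step for (a) expands $Q_n$ by the recursion, applies (a) at levels $n-1$ and $n-2$, and the desired comparison reduces to the two monotonicity facts $b_n\geq\sqrt{b_{n-1}}$ and $b_{n-1}/\sqrt{b_{n-2}}\geq\sqrt{b_{n-1}}$, both consequences of $b_i\geq 1$ and $b_{n-1}\geq b_{n-2}$. The inductive step for (b) rewrites $Q_nQ_{n-1}=b_n Q_{n-1}^2+b_{n-1}Q_{n-1}Q_{n-2}$, substitutes the squared form of (a) for the first summand and (b) itself for the second, factors out $\prod_{i=1}^{n-2}b_i$, and the whole estimate collapses to the single inequality $b_n F_n\geq b_{n-1} F_n$, i.e.\ $b_n\geq b_{n-1}$. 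The base cases $n=1,2$ are verified by direct computation from the initial values $Q_{-1}=0$, $Q_0=1$.

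The main obstacle I anticipate is calibrating (a) and (b) so that they close under simultaneous induction: the exponent $1/2$ on $\prod b_i$ in (a) and the exponent $1$ in (b) are chosen precisely so that the only nontrivial ingredient at each step is the defining admissibility condition $b_n\geq b_{n-1}$. Once the two auxiliary inequalities are in place, assembling the combined lower bound on $Q_n(Q_n+Q_{n-1})$, passing from the Fibonacci factor to $\phi^{2n}$ via Binet's formula, and summing over admissible sequences by means of Lemma~\ref{Numbers} are routine.
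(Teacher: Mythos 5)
Your proposal is correct and follows the same overall strategy as the paper: identify $\{b_n=1\}$ with the all-ones cylinder (whose $Q_n$ are Fibonacci numbers, giving the $\left(\frac{\sqrt5+1}{2}\right)^{-2n}$ rate via Binet), and bound $\mathbf{P}(b_n\leq j)$ by the number of admissible cylinders from Lemma~\ref{Numbers} times the measure of the largest one. The only real difference is that the paper simply asserts, without justification, that $\mathbf{P}(B(b_1,\dots,b_n))\leq \mathbf{P}(B(1,\dots,1))$ for every admissible block, whereas you actually prove this maximality via the simultaneous induction on $Q_n\geq F_{n+1}\sqrt{\prod_{i<n}b_i}$ and $Q_nQ_{n-1}\geq F_nF_{n+1}\prod_{i<n}b_i$; I checked that both inductive steps do reduce to $b_n\geq b_{n-1}$ as you claim, and that adding the two estimates recovers $Q_n(Q_n+Q_{n-1})\geq F_{n+1}F_{n+2}\prod_{i<n}b_i$ via $F_{n+1}^2+F_nF_{n+1}=F_{n+1}F_{n+2}$. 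So your write-up is, if anything, more complete than the paper's at that one step; everything else (the count $\sharp\Sigma_{n,\leq j}=C^{j-1}_{n+j-1}$, the Fibonacci asymptotics) coincides.
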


\begin{proof}
By the non-decreasing property of $b_n$, it follows from Proposition \ref{cylinder} that
\begin{equation}\label{number 1}
\mathbf{P}(b_n = 1) = \mathbf{P}(B(\underbrace{1,1,\cdots,1}_{n})) = \frac{1}{Q_n(Q_n+Q_{n-1})},
\end{equation}
where $Q_n$ satisfies the recursive formula $Q_n = Q_{n-1}+ Q_{n-2}$ under the conventions $Q_{-1} =0$ and $Q_0 =1$. This indicates that $\{Q_n\}_{n\geq 0}$ is a sequence of Fibonacci numbers. So,
\[
Q_n =\frac{1}{\sqrt{5}}\cdot \left(\left(\frac{1+\sqrt{5}}{2}\right)^{n+1}- \left(\frac{1-\sqrt{5}}{2}\right)^{n+1}\right). 
\]
Since $-1 \leq (\frac{1-\sqrt{5}}{1+\sqrt{5}})^{n} \leq \frac{\sqrt{5}-1}{\sqrt{5}+1}$ for any $n \geq 1$, it is easy to check that
\[
\frac{2}{5+\sqrt{5}}\cdot \left(\frac{1+\sqrt{5}}{2}\right)^{n+1} \leq Q_n \leq \frac{2}{\sqrt{5}}\cdot \left(\frac{1+\sqrt{5}}{2}\right)^{n+1}.
\]
Combing this with (\ref{number 1}), notice that $Q_n \geq Q_{n-1}$, we have that
\begin{equation}\label{fibonacci}
A^{-1}\cdot \left(\frac{\sqrt{5}+1}{2}\right)^{-2n}  \leq\frac{1}{2Q^2_n} \leq \mathbf{P}(b_n = 1) \leq \frac{1}{Q^2_n}  = A \cdot \left(\frac{\sqrt{5}+1}{2}\right)^{-2n},
\end{equation}
where $A >1$ is an absolute constant. Since
\[
\left\{x \in [0,1): b_n(x) \leq j\right\} = \bigcup_{(b_1,b_2,\cdots,b_n) \in \Sigma_{n,\leq j}} B(b_1,b_2,\cdots,b_n)
\]
and for any $(b_1,b_2,\cdots,b_n) \in \Sigma_{n,\leq j}$,
\[
\mathbf{P}(B(b_1, b_2, \cdots, b_n) \leq \mathbf{P}(B(\underbrace{1,1,\cdots,1}_{n})),
\]
combing these with Lemma \ref{Numbers} and (\ref{fibonacci}), we deduce that
\[
\mathbf{P}\left\{x \in [0,1): b_n(x) \leq j\right\} \leq \sharp \Sigma_{n,\leq j} \cdot \mathbf{P}(B(\underbrace{1,1,\cdots,1}_{n})) \leq  A\cdot C^{j-1}_{n+j-1} \cdot \left(\frac{\sqrt{5}+1}{2}\right)^{-2n}.
\]
\end{proof}

To prove Theorem \ref{LDP}, we also need the following Lemma \ref{jixian}.

\begin{lemma}\label{jixian}
Let $\{b_n: n \geq 1\}$ be the partial quotient sequence of the ECF expansion. Then
\begin{equation*}
\lim_{n \to \infty}\frac{1}{n}\log \mathbf{E}(b_n^\theta) =
\begin{cases}
\max\left\{-2\log \frac{\sqrt{5}+1}{2},\log \frac{1}{1-\theta}\right\}, &\text{if $\theta < 1$}; \\
+\infty , & \text{if $\theta \geq 1$}.
\end{cases}
\end{equation*}
\end{lemma}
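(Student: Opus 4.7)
The case $\theta \geq 1$ is immediate: by (\ref{initial distribution}) in Proposition \ref{Markov chain}, $\mathbf{E}(b_1^\theta) = \sum_{j \geq 1} j^{\theta-1}/(j+1) = +\infty$, and since $b_n \geq b_1$ pointwise we have $\mathbf{E}(b_n^\theta) = +\infty$ for every $n$, giving the claim. Assume from now on $\theta < 1$; write $\phi = (\sqrt{5}+1)/2$ and $u_n := \mathbf{E}(b_n^\theta)$. My plan is to sandwich $u_n$ between two quantities whose $\tfrac{1}{n}\log$ converges to $\max\{-2\log\phi, \log\tfrac{1}{1-\theta}\}$. The common input is the identity (\ref{lei Markov}) together with the cylinder-ratio bounds
\[ \tfrac{b_{n-1}}{k(k+2)} \leq \tfrac{\mathbf{P}(B(b_1,\ldots,b_{n-1},k))}{\mathbf{P}(B(b_1,\ldots,b_{n-1}))} \leq \tfrac{b_{n-1}+1}{k(k+1)} \]
established inside the proof of Proposition \ref{Markov chain}, valid for every admissible prefix and every $k \geq b_{n-1}$. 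Summing $k^\theta$ against these ratios and invoking Lemma \ref{jichu} (valid for $b_{n-1} > 1$; the case $b_{n-1}=1$ is bounded directly by a finite constant $C_\theta$) produces
\[ b_{n-1}^\theta \cdot \tfrac{b_{n-1}}{b_{n-1}+2}\cdot\tfrac{1}{1-\theta} \;\leq\; \mathbf{E}(b_n^\theta \mid B(b_1,\ldots,b_{n-1})) \;\leq\; b_{n-1}^\theta \bigl(1+\tfrac{1}{b_{n-1}}\bigr)\bigl(1-\tfrac{1}{b_{n-1}}\bigr)^{\theta-1}\tfrac{1}{1-\theta}. \]

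For the upper bound, fix $\varepsilon > 0$ and choose $j_0 = j_0(\varepsilon)$ so that $(1+1/j)(1-1/j)^{\theta-1} \leq 1+\varepsilon$ whenever $j \geq j_0$. Summing the right-hand estimate against $\mathbf{P}(B(b_1,\ldots,b_{n-1}))$ and splitting on whether $b_{n-1} < j_0$ or $b_{n-1} \geq j_0$, the small-prefix contribution is at most $C_\theta\, j_0^\theta\, \mathbf{P}(b_{n-1} < j_0) \leq K n^{j_0} \phi^{-2(n-1)}$ by Lemma \ref{number}, yielding the recursion
\[ u_n \leq K n^{j_0}\phi^{-2n} + \tfrac{1+\varepsilon}{1-\theta}\, u_{n-1}. \]
A standard iteration shows $u_n \leq O(\mathrm{poly}(n)) \cdot \max\bigl(\phi^{-2}, (1+\varepsilon)/(1-\theta)\bigr)^n$, and sending $\varepsilon \to 0$ gives $\limsup_n \tfrac{1}{n}\log u_n \leq \max\{-2\log\phi, \log\tfrac{1}{1-\theta}\}$.

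For the lower bound, two estimates are combined. First, $u_n \geq \mathbf{P}(b_n = 1) \geq A^{-1}\phi^{-2n}$ by Lemma \ref{number}, so $\liminf_n \tfrac{1}{n}\log u_n \geq -2\log\phi$. Second, set $w_n := \mathbf{E}(b_n^\theta \mathbf{1}_{b_n \geq j_0})$ with $j_0 = j_0(\varepsilon)$ chosen so that $j/(j+2) \geq 1-\varepsilon$ for $j \geq j_0$. The left-hand conditional estimate above, applied only to prefixes with $b_{n-1} \geq j_0$ (for which $b_n \geq j_0$ is automatic), gives the clean recursion $w_n \geq \tfrac{1-\varepsilon}{1-\theta}\, w_{n-1}$, whence $u_n \geq w_n \geq \bigl(\tfrac{1-\varepsilon}{1-\theta}\bigr)^{n-1} w_1$. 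Since $w_1 = \sum_{j \geq j_0} j^{\theta-1}/(j+1)$ is a strictly positive constant for $\theta < 1$, taking $\tfrac{1}{n}\log$ and letting $\varepsilon \to 0$ yields $\liminf_n \tfrac{1}{n}\log u_n \geq \log\tfrac{1}{1-\theta}$. Combining the two estimates gives the matching lower bound.

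The main obstacle is that the ECF partial quotient sequence is not Markovian (Remark \ref{NMC}), so the recursions cannot be phrased as literal one-step Markov computations; instead one has to work with the uniform two-sided cylinder-ratio bounds directly on admissible finite sequences. A related nuisance is that Lemma \ref{jichu} excludes $j=1$, so the contribution from small $b_{n-1}$ must be absorbed into a $\mathrm{poly}(n) \cdot \phi^{-2n}$ error controlled by Lemma \ref{number}; it is precisely this error that injects the $-2\log\phi$ term into the final maximum.
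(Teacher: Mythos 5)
Your proposal is correct and follows essentially the same route as the paper: the $\theta\ge 1$ case via $b_n\ge b_1$, and for $\theta<1$ a sandwich built from the cylinder-ratio bounds of Proposition \ref{Markov chain}, the one-step factor $\tfrac{1}{1-\theta}$ from Lemma \ref{jichu}, the $\bigl(\tfrac{\sqrt{5}+1}{2}\bigr)^{-2n}$ contribution of small digits from Lemma \ref{number}, and an $n$-fold iteration followed by $\varepsilon\to 0$. Your phrasing in terms of conditional expectations on full cylinders and the truncated quantity $w_n$ is only a cosmetic repackaging of the paper's tail sums $\sum_{j>N}\mathbf{P}(b_{n-1}=j)\,j^{\theta}$, so no substantive difference.
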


\begin{proof}
Let $\theta \geq 1$. Notice that $b_{n+1} \geq b_n$ with $b_1 \geq 1$ for all $n \geq 1$, the equation (\ref{initial distribution}) yields that for any $n \geq 1$,
\begin{equation*}
\mathbf{E}(b_n^\theta) \geq \mathbf{E}(b_1^\theta) = \sum_{k=1}^\infty \mathrm{P}(b_1 =k)\cdot k^\theta = \sum_{k=1}^\infty \frac{k^\theta}{k(k+1)} = + \infty.
\end{equation*}
Therefore, $\lim\limits_{n \to \infty}\frac{1}{n}\log E(b_n^\theta) = + \infty$.

In the following, we always assume that $\theta < 1$. Since
\[
\lim_{j \to \infty} \frac{j}{j+2} = \lim_{j \to \infty} \left(1+ \frac{1}{j}\right)\cdot \left(1-\frac{1}{j}\right)^{\theta -1} =1,
\]
Proposition \ref{lei Markov} and Lemma \ref{jichu} imply that for any $0<\varepsilon <1$, there exists positive integer $N = N(\varepsilon)$ such that for all $j > N$,
\begin{equation}\label{daxiaojie}
\frac{1-\varepsilon}{1-\theta} \leq \sum_{k=j}^{\infty} \mathbf{P}(b_{n+1}= k~|~b_n=j)\cdot \left(\frac{k}{j}\right)^\theta \leq \frac{1+\varepsilon}{1-\theta}.
\end{equation}
By the definition of expectation, we know that
\[
\mathbf{E}(b_n^\theta) = \sum_{k=1}^{\infty} \mathbf{P}(b_n = k) \cdot k^\theta = \sum_{k=1}^{N} \mathbf{P}(b_n = k) \cdot k^\theta + \sum_{k=N+1}^{\infty} \mathbf{P}(b_n = k) \cdot k^\theta.
\]
We will prove
\[
\lim_{n \to \infty}\frac{1}{n}\log \mathbf{E}(b_n^\theta) = \max\left\{-2\log \frac{\sqrt{5}+1}{2},\log \frac{1}{1-\theta}\right\}.
\]
The proof is divided into two parts:\\
{\bf Part 1. Lower bound}\\
$\bullet$ By (\ref{fibonacci}), it is clear to see that for any $n \geq 1$,
\[
\sum_{k=1}^{N} \mathbf{P}(b_n = k) \cdot k^\theta \geq \mathbf{P}(b_n = 1) = \mathbf{P}(B(1,1,\cdots,1)) \geq A^{-1}\cdot \left(\frac{\sqrt{5}+1}{2}\right)^{-2n}.
\]
$\bullet$ For any $n \geq 1$, by the definition of conditional probability, we deduce that
\begin{align}\label{expectation}
\sum_{k=N+1}^{\infty} \mathbf{P}(b_n = k) \cdot k^\theta &= \sum_{k =N+1}^\infty \sum_{j = 1}^{k}\mathbf{P}(b_n = k~|~b_{n-1} = j) \cdot \mathbf{P}(b_{n-1} = j)\cdot k^\theta \notag \\
&= \sum_{j = 1}^{N} \mathbf{P}(b_{n-1} = j) \cdot j^\theta \cdot \sum_{k =N+1}^\infty \mathbf{P}(b_n = k~|~b_{n-1} = j) \cdot \left(\frac{k}{j}\right)^\theta \notag \\
& + \sum_{j = N+1}^{\infty} \mathbf{P}(b_{n-1} = j) \cdot j^\theta \cdot \sum_{k =j}^\infty \mathbf{P}(b_n = k~|~b_{n-1} = j) \cdot \left(\frac{k}{j}\right)^\theta,
\end{align}
where the second equality is obtained by inverting the order of summations. Since the first term of the second equality in (\ref{expectation}) is nonnegative, we have that
\begin{align*}
\sum_{k=N+1}^{\infty} \mathbf{P}(b_n = k) \cdot k^\theta
& \geq \sum_{j = N+1}^{\infty} \mathbf{P}(b_{n-1} = j) \cdot j^\theta \cdot \sum_{k =j}^\infty \mathbf{P}(b_n = k~|~b_{n-1} = j) \cdot \left(\frac{k}{j}\right)^\theta \notag \\
& \geq \frac{1-\varepsilon}{1-\theta} \cdot \sum_{j = N+1}^{\infty} \mathbf{P}(b_{n-1} = j) \cdot j^\theta,
\end{align*}
where the last inequality follows from (\ref{daxiaojie}). Repeating the above procedure $(n-1)$ times, we obtain that
\begin{align*}
\sum_{k=N+1}^{\infty} \mathbf{P}(b_n = k) \cdot k^\theta
& \geq \frac{1-\varepsilon}{1-\theta} \cdot \sum_{j = N+1}^{\infty} \mathbf{P}(b_{n-1} = j) \cdot j^\theta \\
& \geq \cdots\\
& \geq \left(\frac{1-\varepsilon}{1-\theta}\right)^{n-1} \cdot \sum_{j = N+1}^{\infty} \mathbf{P}(b_1 = j) \cdot j^\theta : = M_1 \cdot \left(\frac{1-\varepsilon}{1-\theta}\right)^{n-1},
\end{align*}
where $M_1:=M_1(\varepsilon,\theta) = \sum_{j = N+1}^{\infty} \frac{j^\theta}{j(j+1)}$ is a positive constant since $\theta <1$ and it also only depends on $\varepsilon$ and $\theta$.

Therefore,
\[
\mathbf{E}(b_n^\theta) \geq A^{-1}\cdot\left(\frac{\sqrt{5}+1}{2}\right)^{-2n} + M_1 \cdot \left(\frac{1-\varepsilon}{1-\theta}\right)^{n-1}.
\]
In view of Lemma \ref{basic}, we deduce that
\begin{align*}
\liminf_{n \to \infty}\frac{1}{n}\log \mathbf{E}(b_n^\theta) &\geq \liminf_{n \to \infty}\frac{1}{n}\log \left( A^{-1} \cdot\left(\frac{\sqrt{5}+1}{2}\right)^{-2n} + M_1 \cdot \left(\frac{1-\varepsilon}{1-\theta}\right)^{n-1}\right) \\
& = \max\left\{-2\log \frac{\sqrt{5}+1}{2},\log \frac{1-\varepsilon}{1-\theta}\right\}.
\end{align*}
Since $0 < \varepsilon <1$ is arbitrary, we get
\[
\liminf_{n \to \infty}\frac{1}{n}\log \mathbf{E}(b_n^\theta) \geq \max\left\{-2\log \frac{\sqrt{5}+1}{2},\log \frac{1}{1-\theta}\right\}.
\]
{\bf Part 2. Upper bound} Let $\gamma = ((\sqrt{5}+1)/2)^{-2}$. \\
$\bullet$ In view of Lemma \ref{number}, we have that
\begin{align*}
\sum_{k=1}^{N} \mathbf{P}(b_n = k) \cdot k^\theta  \leq N\cdot\mathbf{P}(b_n \leq N)
 \leq  N \cdot A\cdot  C^{N-1}_{n+N-1}\cdot \gamma^n.
\end{align*}
$\bullet$ We first estimate the first term of the second equality in (\ref{expectation}) and then give an upper bound for the second term. Notice that
$\sum_{j = 1}^{N}\mathbf{P}(b_{n-1} = j) = \mathbf{P}(b_{n-1} \leq N) \leq A\cdot  C^{N-1}_{n+N-2}\cdot \gamma^{n-1}$, we know that
\begin{align}\label{first term}
& \sum_{j = 1}^{N} \mathbf{P}(b_{n-1} = j)\cdot j^\theta \cdot \sum_{k =N+1}^\infty \mathbf{P}(b_n = k~|~b_{n-1} = j) \cdot \left(\frac{k}{j}\right)^\theta \notag \\
\leq &\ \sum_{j = 1}^{N}\mathbf{P}(b_{n-1} = j) \cdot \sum_{k =N+1}^\infty \frac{j+1}{k(k+1)} \cdot k^\theta \notag \\
\leq &\ (N+1)\cdot\sum_{j = 1}^{N}\mathbf{P}(b_{n-1} = j)\cdot \sum_{k =N+1}^\infty \frac{k^\theta }{k(k+1)} \notag \\
\leq &\ M_1 \cdot (N+1) \cdot A\cdot  C^{N-1}_{n+N-2}\cdot \gamma^{n-1} := M_2 \cdot C^{N-1}_{n+N-2}\cdot \gamma^{n-1},
\end{align}
where the first inequality is from Proposition \ref{Markov chain}, the second inequality is from $j \leq N$ and the constant $M_2 = M_1 \cdot (N+1) \cdot A$ only depends on $\varepsilon$ and $\theta$. It follows from the right inequality in (\ref{daxiaojie}) that
\[
\sum_{j = N+1}^{\infty} \mathbf{P}(b_{n-1} = j) \cdot j^\theta \cdot \sum_{k =j}^\infty \mathbf{P}(b_n = k~|~b_{n-1} = j) \cdot \left(\frac{k}{j}\right)^\theta
\leq  \frac{1+\varepsilon}{1-\theta} \cdot \sum_{j = N+1}^{\infty} \mathbf{P}(b_{n-1} = j) \cdot j^\theta.
\]
Combing this with (\ref{expectation}) and (\ref{first term}), we obtain that
\begin{align}\label{second term 2}
\sum_{k=N+1}^{\infty} \mathbf{P}(b_n = k) \cdot k^\theta \leq M_2 \cdot C^{N-1}_{n+N-2}\cdot \gamma^{n-1} +
\frac{1+\varepsilon}{1-\theta} \cdot \sum_{j = N+1}^{\infty} \mathbf{P}(b_{n-1} = j) \cdot j^\theta.
\end{align}
Similarly, we have that
\[
\sum_{j = N+1}^{\infty} \mathbf{P}(b_{n-1} = j) \cdot j^\theta \leq M_2 \cdot C^{N-1}_{n+N-3}\cdot \gamma^{n-2}+
\frac{1+\varepsilon}{1-\theta} \cdot \sum_{j = N+1}^{\infty} \mathbf{P}(b_{n-2} = j) \cdot j^\theta.
\]
In view of (\ref{second term 2}), we deduce that
\begin{align}\label{second term 3}
& \sum_{k=N+1}^{\infty} \mathbf{P}(b_n = k) \cdot k^\theta \notag \\
\leq&\ M_2 \cdot C^{N-1}_{n+N-2}\cdot \gamma^{n-1} + \frac{1+\varepsilon}{1-\theta}\cdot M_2 \cdot C^{N-1}_{n+N-3}\cdot \gamma^{n-2} + \left(\frac{1+\varepsilon}{1-\theta}\right)^2 \cdot \sum_{j = N+1}^{\infty} \mathbf{P}(b_{n-2} = j) \cdot j^\theta \notag \\
\leq &\ M_2 \cdot C^{N-1}_{n+N-2} \cdot \left( \gamma^{n-1} + \gamma^{n-2}\cdot \frac{1+\varepsilon}{1-\theta}\right) + \left(\frac{1+\varepsilon}{1-\theta}\right)^2 \cdot \sum_{j = N+1}^{\infty} \mathbf{P}(b_{n-2} = j) \cdot j^\theta,
\end{align}
where the last inequality is from $C^{N-1}_{n+N-3}  \leq C^{N-1}_{n+N-2}$. Iterating the process in (\ref{second term 3}), we know that
\begin{align}\label{second term 4}
\sum_{k=N+1}^{\infty} \mathbf{P}(b_n = k) \cdot k^\theta & \leq M_2 \cdot C^{N-1}_{n+N-2} \cdot \sum_{m=0}^{n-2} \gamma^{n-m} \left(\frac{1+\varepsilon}{1-\theta}\right)^m + M_1 \cdot \left(\frac{1+\varepsilon}{1-\theta}\right)^{n-1} \notag \\
& \leq M_2 \cdot C^{N-1}_{n+N-2} \cdot \sum_{m=0}^{n-1} \gamma^{n-m} \left(\frac{1+\varepsilon}{1-\theta}\right)^m
\end{align}
where the first inequality is from the definition of $M_1$ and the last inequality follows from $C^{N-1}_{n+N-2} \geq 1$ and the definition of $M_2$. It is not difficult to check that the geometric series
\[
\sum_{m=0}^{n-1} \gamma^{n-m} \left(\frac{1+\varepsilon}{1-\theta}\right)^m = M_3 \cdot \left(\left( \frac{1+\varepsilon}{1-\theta}\right)^{n}- \gamma^n \right),
\]
where $M_3$ is the constant only depending on $\varepsilon$ and $\theta$. Combing this with (\ref{second term 4}), we have that
\[
\sum_{k=N+1}^{\infty} \mathbf{P}(b_n = k) \cdot k^\theta \leq M \cdot C^{N-1}_{n+N-2} \cdot \left(\left( \frac{1+\varepsilon}{1-\theta}\right)^{n}- \gamma^n \right),
\]
where $M = M_2\cdot M_3 $ is a constant only depending on $\varepsilon$ and $\theta$.

Therefore,
\begin{align*}
\mathbf{E}(b_n^\theta) &= \sum_{k=1}^{N} \mathbf{P}(b_n = k) \cdot k^\theta + \sum_{k=N+1}^{\infty} \mathbf{P}(b_n = k) \cdot k^\theta \\
& \leq N \cdot A\cdot  C^{N-1}_{n+N-1}\cdot \gamma^n + M \cdot C^{N-1}_{n+N-2} \cdot \left(\left( \frac{1+\varepsilon}{1-\theta}\right)^{n}- \gamma^n \right) \\
& =  H(n) \cdot\left(\frac{\sqrt{5}+1}{2}\right)^{-2n} + M \cdot C^{N-1}_{n+N-2} \cdot \left(\frac{1+\varepsilon}{1-\theta}\right)^{n}.
\end{align*}
where $C^{N-1}_{n+N-2}$ and $H(n)$ are both polynomials of $n$. Consequently, we deduce that
\begin{align*}
\limsup_{n \to \infty}\frac{1}{n}\log \mathbf{E}(b_n^\theta) &\leq \limsup_{n \to \infty}\frac{1}{n}\log \left( H(n) \cdot\left(\frac{\sqrt{5}+1}{2}\right)^{-2n} + M \cdot C^{N-1}_{n+N-2} \cdot \left(\frac{1+\varepsilon}{1-\theta}\right)^{n}\right)\\
& = \max\left\{-2\log \frac{\sqrt{5}+1}{2},\log \frac{1+\varepsilon}{1-\theta}\right\},
\end{align*}
where the last equality is obtained by observing that $C^{N-1}_{n+N-2}$ and $H(n)$ are both polynomials of $n$. Let $\varepsilon \to 0^+$, we obtain that
\[
\limsup_{n \to \infty}\frac{1}{n}\log \mathbf{E}(b_n^\theta) \leq \max\left\{-2\log \frac{\sqrt{5}+1}{2},\log \frac{1}{1-\theta}\right\}.
\]
\end{proof}

\begin{remark}\label{vs}
Since the digit sequence $\{q_n:n \geq 1\}$ (the notation follows Zhu \cite{lesZhu2014}) of Engel expansions is also non-decreasing and forms a forms a homogeneous Markov chain (see Erd\H{o}s \cite{lesE.R.S58}), we know that the similar result of Lemma \ref{number} in the setting of Engel expansions can be obtained
\begin{equation}\label{log2}
\mathrm{P}(q_n =2) = 2^{-n}\ \ \ \ \text{and}\ \ \ \ \mathrm{P}(q_n \leq j) \leq C^{j-1}_{n+j-1} \cdot 2^{-n}.
\end{equation}
The above method in Lemma \ref{jixian} immediately yields that
\begin{equation*}
\lim_{n \to \infty}\frac{1}{n}\log \mathbf{E}(q_n^\theta) =
\begin{cases}
\max\left\{-\log 2,\log \frac{1}{1-\theta}\right\}, &\text{if $\theta < 1$}; \\
+\infty , & \text{if $\theta \geq 1$},
\end{cases}
\end{equation*}
where the quantity $\log2$ is from (\ref{log2}).
This limit plays an important role in the proof of Zhu (see \cite[Lemma 1.1]{lesZhu2014}). However, the digit sequence $\{d_n:n \geq 1\}$ of modified Engel continued fractions or alternating Engel expansions is increasing. The similar arguments of Lemmas \ref{number} and \ref{jixian} imply that
\[
\mathrm{P}(d_n =n) = \frac{1}{n!}\ \ \ \ \text{and}\ \ \ \ \mathrm{P}(q_n \leq j) \leq C^{j-1}_{n+j-1} \cdot \frac{1}{n!}.
\]
Note that $\lim\limits_{n \to \infty}(-\log n!)/n = -\infty$, so we have
\begin{equation*}
\lim_{n \to \infty}\frac{1}{n}\log \mathbf{E}(d_n^\theta) =
\begin{cases}
\log \left(\frac{1}{1-\theta}\right), &\text{if $\theta < 1$}; \\
+\infty , & \text{if $\theta \geq 1$}.
\end{cases}
\end{equation*}
Combing this with G\"{a}rtner-Ellis theorem, we obtain the large deviations for modified Engel continued fractions and alternating Engel expansions.
\end{remark}

Now we are ready to prove Theorem \ref{LDP}.

\begin{proof}[Proof of Theorem \ref{LDP}]
For any $\theta \in \mathbb{R}$, we define the function as
\[
\Lambda_n(\theta) = \log \mathbf{E}\left(\exp{\left(\frac{\log b_n -n}{n}\cdot \theta\right)}\right).
\]
By Lemma \ref{jixian}, we know that the pressure function is given as
\[
\Lambda(\theta) = \lim_{n \to \infty}\frac{1}{n}\Lambda_n\left(n\theta\right) =
\begin{cases}
-\theta - 2\log\frac{\sqrt{5}+1}{2}, &\text{if $\theta \leq -\frac{\sqrt{5}+1}{2}$}; \\
-\theta - \log(1-\theta), &\text{if $-\frac{\sqrt{5}+1}{2} <\theta < 1$}; \\
+\infty,  & \text{if $\theta \geq 1$}.
\end{cases}
\]
By G\"{a}rtner-Ellis theorem (see \cite[Theorem 2.3.6]{lesD.Z1998}), we obtain the sequence $\left\{\frac{\log b_n -n}{n}: n\geq 1\right\}$ satisfies a LDP with speed $n$ and good rate function
\[
I(x) = \sup_{\theta \in \mathbb{R}}\left\{\theta x - \Lambda(\theta)\right\}\ \text{for all}\ x \in \mathbb{R}.
\]
Now we show that $I(x)$ is the same as (\ref{LDP equation}). Notice that $\theta x - \Lambda(\theta)= -\infty$ for all $x \in \mathbb{R}$ when $\theta \geq 1$, so we only need to compute the followings
\[
\Phi(x) = \sup_{-\frac{\sqrt{5}+1}{2} < \theta < 1}\big\{\theta x +\theta+ \log(1-\theta)\big\}\ \text{and}\
\Psi(x) = \sup_{\theta \leq -\frac{\sqrt{5}+1}{2}}\left\{\theta x +\theta+ 2\log\frac{\sqrt{5}+1}{2}\right\}.
\]
Thus, $I(x) = \max\{\Phi(x), \Psi(x)\}$ for all $x \in \mathbb{R}$. We first give that
\[
\Phi(x) =
\begin{cases}
x - \log(x+1), &\text{if $x > -\frac{\sqrt{5}-1}{2}$}; \\
-\frac{\sqrt{5}+1}{2}(x +1) + 2\log\frac{\sqrt{5}+1}{2},  & \text{if $x \leq -\frac{\sqrt{5}-1}{2}$}.
\end{cases}
\]
Let $f(\theta) = \theta x + \theta+ \log(1-\theta)$ for any $-(\sqrt{5}+1)/2< \theta < 1$. It is clear to check $f$ is strictly concave and that $\theta = x/(x+1)$ is the unique maximal point point of it. From the inequalities $ -(\sqrt{5}+1)/2 < x/(x+1) < 1$, we know that $x > - (\sqrt{5}-1)/2$. That is to say, when $x > - (\sqrt{5}-1)/2$, the function $f$ reaches the maximal value at the point $x/(x+1) \in (-(\sqrt{5}+1)/2, 1)$, i.e., $\Phi(x) = x - \log(x+1)$ in this case. Here we claim that $f$ is decreasing on the interval $(-(\sqrt{5}+1)/2, 1)$ when $x \leq - (\sqrt{5}-1)/2$. In fact, it follows from $-1 <x \leq - (\sqrt{5}-1)/2$ that the maximal point $x/(x+1) \leq -(\sqrt{5}+1)/2$. That is to say, the maximal point $x/(x+1)$ locates in the left of the interval $(-(\sqrt{5}+1)/2, 1)$. In other words, the function $f$ is decreasing on the interval $(-(\sqrt{5}+1)/2, 1)$. If $x \leq -1$, we know that $f(\theta) = \theta (x +1) + \log(1-\theta)$ is decreasing. As is mentioned above, the function $f$ reaches the maximal value at the point $-(\sqrt{5}+1)/2$. Therefore, $\Phi(x)$ is completely determined for any $x \in \mathbb{R}$. Next, we show that
\[
\Psi(x) =
\begin{cases}
-\frac{\sqrt{5}+1}{2}(x +1) + 2\log\frac{\sqrt{5}+1}{2}, &\text{if $x \geq -1$}; \\
+ \infty,  & \text{if $x < -1$}.
\end{cases}
\]
Let $g(\theta) = \theta x +\theta+ 2\log((\sqrt{5}+1)/2)$ for any $\theta \leq -(\sqrt{5}+1)/2$. In fact, when $x < -1$, the function $g$ is decreasing and hence that $\Psi(x) = + \infty$ in this case. If $x >-1$, the function $g$ is increasing and hence that $g$ reaches the maximal value at $-(\sqrt{5}+1)/2$. It is easy to see $g$ is constantly $2\log((\sqrt{5}+1)/2)$ when $x =-1$. Now we claim that
\begin{equation*}
I(x) = \sup_{\theta \in \mathbb{R}}\left\{\theta x - \Lambda(\theta)\right\} =
\begin{cases}
x - \log(x +1), &\text{if $x > -\frac{\sqrt{5}-1}{2}$}; \\
-\frac{\sqrt{5}+1}{2}(x +1) + 2\log\frac{\sqrt{5}+1}{2}, &\text{if $-1\leq x \leq -\frac{\sqrt{5}-1}{2}$}; \\
+\infty,  & \text{if $x < -1$}.
\end{cases}
\end{equation*}
Since $I(x) = \max\{\Phi(x), \Psi(x)\}$ for all $x \in \mathbb{R}$, by the definitions of $\Phi(x)$ and $\Psi(x)$, we easily determine the rate function $I(x)$ when $-1\leq x \leq -(\sqrt{5}-1)/2$ and $x < -1$. It remains to compare $\Phi(x)$ with $\Psi(x)$ when $x >-(\sqrt{5}-1)/2$. To do this, define the function
\[
h(x) = x - \log(x +1) + \frac{\sqrt{5}+1}{2}(x +1)- 2\log\frac{\sqrt{5}+1}{2}.
\]
Then we have that $h(-(\sqrt{5}-1)/2) =0$ and the derivative of $h$ satisfies $h^\prime(x) = x/(x+1) + (\sqrt{5}+1)/2$ and $h^\prime(x) > 0$ if $x >-(\sqrt{5}-1)/2$. Thus, $h(x) \geq 0$ when $x >-(\sqrt{5}-1)/2$. Therefore, the rate function $I(x)$ is completely established.

\end{proof}

\subsection{Proof of moderate deviation principle}

As applications of Proposition \ref{Markov chain} and Lemma \ref{jichu}, we obtain the following lemma.

\begin{lemma}\label{MDPlemma}
Let $n \in \mathbb{N}$ and $-1/2<\theta <1/2$. Then for any $j\geq 3n-1$, we have
\[
\sum_{k=j}^{\infty} \mathbf{P}(b_{n+1}= k~|~b_n=j)\cdot \left(\frac{k}{j}\right)^\theta  \leq  \frac{1}{1-n^{-1}}\cdot \frac{1}{1-\theta}.
\]
\end{lemma}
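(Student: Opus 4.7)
The plan is to apply the upper bound in Proposition \ref{Markov chain} together with the estimate in Lemma \ref{jichu}, thereby reducing the assertion to an elementary polynomial inequality. First I would substitute the inequality $\mathbf{P}(b_{n+1}=k\mid b_n=j) \leq (j+1)/[k(k+1)]$ from \eqref{transition distribution} into the sum, and then invoke the second inequality of Lemma \ref{jichu} (valid since $\theta < 1/2 < 1$ and $j\geq 3n-1 > 1$ for $n\geq 1$) to bound the result by $(1+1/j)(1-1/j)^{\theta-1}\cdot 1/(1-\theta)$. After cancelling the factor $1/(1-\theta)$ from both sides of the target, the task reduces to proving
\[
\Big(1+\tfrac{1}{j}\Big)\Big(1-\tfrac{1}{j}\Big)^{\theta-1} \leq \frac{n}{n-1} \qquad \Big(j \geq 3n-1,\ -\tfrac{1}{2} < \theta < \tfrac{1}{2}\Big).
\]
The case $n=1$ is vacuous because the right-hand side is $+\infty$, so one may assume $n \geq 2$, in which case $j \geq 5$.

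Next I would isolate the extremal configuration via monotonicity. Writing $t=1/j\in(0,1)$, the function $(1+t)(1-t)^{\theta-1}$ is strictly increasing in $t$ (because $\theta - 1 < 0$ causes both factors to grow with $t$) and strictly decreasing in $\theta$ (since its $\theta$-derivative equals $(1+t)(1-t)^{\theta-1}\log(1-t) < 0$). Therefore its supremum over the allowed region is attained in the limiting corner $(t,\theta) = (1/(3n-1),\,-1/2)$, and the claim reduces to
\[
\frac{3n}{3n-1}\left(\frac{3n-1}{3n-2}\right)^{3/2} \leq \frac{n}{n-1}.
\]

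To finish, squaring both sides (which are positive) yields the equivalent polynomial inequality $9(3n-1)(n-1)^2 \leq (3n-2)^3$; a routine expansion shows the difference $(3n-2)^3 - 9(3n-1)(n-1)^2$ simplifies to $9n^2-9n+1 = 9n(n-1)+1$, which is strictly positive for every $n \geq 1$. The main obstacle is the correct identification of the worst-case corner in the two parameters $(j,\theta)$; once the monotonicity argument collapses the transcendental inequality to a polynomial one, the verification is purely mechanical.
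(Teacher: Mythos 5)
Your proposal is correct and follows essentially the same route as the paper: both reduce the claim, via Proposition \ref{Markov chain} and the second inequality of Lemma \ref{jichu}, to showing $\left(1+\frac{1}{j}\right)\left(1-\frac{1}{j}\right)^{\theta-1}\leq \frac{1}{1-n^{-1}}$ for $j\geq 3n-1$, and both use $\theta>-1/2$ to pass to the worst exponent $-3/2$. The only difference is in verifying this last elementary inequality: you evaluate at the extremal point $j=3n-1$ and square to get the cubic inequality $9(3n-1)(n-1)^2\leq(3n-2)^3$, whereas the paper uses the cruder bound $(1-1/j)^{3/2}\geq 1-2/j$ followed by $\frac{1-2/j}{1+1/j}=1-\frac{3}{j+1}\geq 1-\frac{1}{n}$; both are valid.
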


\begin{proof}
For any $j >1$, it follows from Lemma \ref{jichu} that
\[
\sum_{k=j}^{\infty} \frac{j+1}{k(k+1)} \left(\frac{k}{j}\right)^\theta \leq  \left(1+ \frac{1}{j}\right)\cdot \left(1-\frac{1}{j}\right)^{\theta -1}\cdot \frac{1}{1-\theta}.
\]
Combing this with Proposition \ref{Markov chain}, we deduce that
\[
\sum_{k=j}^{\infty} \mathbf{P}(b_{n+1}= k~|~b_n=j)\cdot \left(\frac{k}{j}\right)^\theta  \leq  \left(1+ \frac{1}{j}\right)\cdot \left(1-\frac{1}{j}\right)^{\theta -1}\cdot \frac{1}{1-\theta}.
\]
Now it is sufficient to prove that
\[
 \left(1+ \frac{1}{j}\right)\cdot \left(1-\frac{1}{j}\right)^{\theta -1} \leq \frac{1}{1-n^{-1}}
\]
for any $j\geq 3n-1$. Let $n \in \mathbb{N}$. In fact, since $-1/2<\theta <1/2$, we deduce that
\begin{equation}\label{MDPeq}
\left(1+ \frac{1}{j}\right)\cdot \left(1-\frac{1}{j}\right)^{\theta -1} \leq \frac{1+1/j}{(1-1/j)^{3/2}}.
\end{equation}
Note that $(1-1/j)^{3/2} \geq 1-2/j$ for any $j \geq 1$, we have
\[
\frac{(1-1/j)^{3/2}}{1+1/j} \geq \frac{1-2/j}{1+1/j} =  1- \frac{3}{j+1} \geq 1 - \frac{1}{n}
\]
for any $j\geq 3n-1$. Combing this with (\ref{MDPeq}), we complete the proof.
\end{proof}

Now we are ready to prove Theorem \ref{MDP}.

\begin{proof}[Proof of Theorem \ref{MDP}]
Let $\{a_n: n \geq 1\}$ be the sequence of positive numbers satisfying the conditions in (\ref{a-n}).
For any $\lambda \in \mathbb{R}$, we consider the logarithmic moment generating function (see Dembo and Zeitouni \cite[Section 2.3]{lesD.Z1998}) of $\frac{\log b_n -n}{a_n}$,
\[
\Lambda_n(\lambda) = \log \mathbf{E}\left(\exp\left(\lambda\cdot\frac{\log b_n -n}{a_n}\right)\right).
\]
From the G\"{a}rtner-Ellis theorem, in order to obtaining the desired result, it suffices to show that for any $\lambda \in \mathbb{R}$,
\[
\Lambda(\lambda) = \lim_{n \to \infty} \frac{n}{a_n^2} \Lambda_n\left(\frac{a_n^2}{n}\lambda\right) = \frac{\lambda^2}{2}.
\]
That is,
\begin{equation}\label{desired result}
\lim_{n \to \infty} \frac{n}{a_n^2} \log \mathbf{E}\left(\exp\left\{\frac{a_n}{n}\left(\log b_n -n
\right)\lambda\right\}\right)= \frac{\lambda^2}{2}.
\end{equation}

For any $\lambda \in \mathbb{R}$ and $n \geq 1$, let
\[
\theta_n:=\theta_n(\lambda) = \frac{a_n}{n}\lambda\ \ \ \  \text{and}\ \ \ \  \Upsilon_n(\lambda) = \mathbf{E}\big(\exp\{\theta_n(\log b_n -n)\}\big).
\]
In view of (\ref{a-n}), it is clear that $\theta_n \to 0$ as $n \to \infty$ and $\Upsilon_n(\lambda)$ can be rewritten as
\begin{equation}\label{Upsilon-n}
\Upsilon_n(\lambda) = e^{-n\theta_n}\mathbf{E}(b_n^{\theta_n}).
\end{equation}
Being similar to the proof of large deviation principle part,
to get (\ref{desired result}), we only need to estimate the expectation $\mathbf{E}(b_n^{\theta_n})$. Since $\theta_n \to 0$ as $n \to \infty$, there exists a positive number $N$ (only depending on $\lambda$) such that for all $n \geq N$, we have $-1/2<\theta_n < 1/2$. In the following, we always fix such $n$. Now we will give the lower and upper bounded estimates of $\mathbf{E}(b_n^{\theta_n})$ respectively.

We first give the lower bound for $\mathbf{E}(b_n^{\theta_n})$. Being similar to the Part 1 in the proof of Lemma \ref{jixian}, we know that
\begin{align*}
\sum_{k=1}^\infty \mathbf{P} (b_n =k)\cdot k^{\theta_n}
& = \sum_{j = 1}^{\infty} \mathbf{P}(b_{n-1} = j) \cdot j^{\theta_n} \cdot \sum_{k =j}^\infty \mathbf{P}(b_n = k~|~b_{n-1} = j) \cdot \left(\frac{k}{j}\right)^{\theta_n} \notag \\
& \geq \sum_{j = 1}^{\infty} \mathbf{P}(b_{n-1} = j) \cdot j^{\theta_n} \cdot \sum_{k =j}^\infty \frac{j}{k(k+2)} \left(\frac{k}{j}\right)^{\theta_n},
\end{align*}
where the last inequality follows from Proposition \ref{Markov chain}. Combing this with Lemma \ref{jichu}, we have that
\begin{align*}
\sum_{k=1}^\infty \mathbf{P} (b_n =k)\cdot k^{\theta_n} & \geq \sum_{j = 1}^{\infty} \mathbf{P}(b_{n-1} = j) \cdot j^{\theta_n} \cdot  \frac{j}{j+2} \cdot \frac{1}{1-\theta_n}\\
& \geq \frac{1}{1+2} \cdot \frac{1}{1-\theta_n}\cdot \sum_{j = 2}^{\infty} \mathbf{P}(b_{n-1} = j) \cdot j^{\theta_n}
\end{align*}
since $j/(j+2) \geq 1/(1+2)$ for any $j \geq 1$. Thus, we deduce that
\[
\sum_{k=1}^\infty \mathbf{P} (b_n =k)\cdot k^{\theta_n} \geq \frac{1}{1+2} \cdot \frac{1}{1-\theta_n}\cdot \sum_{j = 2}^{\infty} \mathbf{P}(b_{n-1} = j) \cdot j^{\theta_n}.
\]
Similarly, we obtain that
\[
\sum_{k = 2}^{\infty} \mathbf{P}(b_{n-1} = k) \cdot k^{\theta_n} \geq \frac{2}{2+2} \cdot \frac{1}{1-\theta_n}\cdot \sum_{j = 3}^{\infty} \mathbf{P}(b_{n-2} = j) \cdot j^{\theta_n}.
\]
Repeating the above procedure, by (\ref{initial distribution}) and $-1/2< \theta_n < 1/2$, we actually have that
\begin{align*}
\sum_{k=1}^\infty \mathbf{P} (b_n =k)\cdot k^{\theta_n} &\geq \left(\frac{1}{3}\cdot \frac{2}{4}\cdot \cdots \cdot \frac{n-1}{n+1}\right)\cdot \left(\frac{1}{1-\theta_n}\right)^{n-1} \cdot \sum_{j = n}^{\infty} \mathbf{P}(b_1 = j) \cdot j^{\theta_n} \\
&\geq M(n) \cdot \frac{2}{n(n+1)} \cdot \left(\frac{1}{1-\theta_n}\right)^{n-1},
\end{align*}
where $M(n) = \sum_{j = n}^{\infty}\frac{1}{(j+1)j^{3/2}}$. Therefore,
\begin{equation}\label{MDPL}
\mathbf{E}(b_n^{\theta_n}) \geq M(n)\cdot \frac{2}{n(n+1)} \cdot \left(\frac{1}{1-\theta_n}\right)^{n-1}.
\end{equation}
Noth that $j(j+1) \leq (j+1)j^{3/2}$ for any $j \geq 1$, we obtain that
\[
\frac{1}{(n+1)n^{3/2}}\leq M(n) \leq \frac{1}{n},
\]
which implies that
\[
\lim_{n \to \infty} \frac{n}{a^2_n} \log M(n)=0
\]
since $\lim_{n \to \infty} (n\log n)/a^2_n =0$ in view of (\ref{a-n}).
Combing this with (\ref{Upsilon-n}) and (\ref{MDPL}), by Taylor formula, we actually deduce that
\begin{align*}
\liminf_{n \to \infty} \frac{n}{a_n^2} \log \mathbf{E}\left(\exp\left\{\frac{a_n}{n}\left(\log b_n -n
\right)\lambda\right\}\right)
\geq \liminf_{n \to \infty} \left( \frac{n^2}{a_n^2}(-\theta_n) +  \frac{n^2}{a_n^2}\log \frac{1}{1-\theta_n} \right) = \frac{\lambda^2}{2}.
\end{align*}

Next we give the upper bound for $\mathbf{E}(b_n^{\theta_n})$. Being similar to the Part 2 in the proof of Lemma \ref{jixian}, let $1+\varepsilon = 1/(1-n^{-1})$ and $N:= N(n)=3n-1$. Lemma \ref{MDPlemma} guarantees that the methods of the Part 2 in the proof of Lemma \ref{jixian} are still valid. Therefore, we actually obtain that
\[
\mathbf{E}(b_n^{\theta_n}) \leq   U(n) \cdot\left(\frac{\sqrt{5}+1}{2}\right)^{-2n} + V(n) \cdot \left(\frac{1}{1-n^{-1}}\cdot \frac{1}{1-\theta_n}\right)^{n},
\]
where $U(n)$ and $V(n)$ both are polynomials of $n$ with some degree. Note that

\[
\lim_{n \to \infty} \frac{n}{a^2_n} \log U(n) = \lim_{n \to \infty} \frac{n}{a^2_n} \log V(n) =\lim_{n \to \infty} \frac{n^2}{a^2_n} \log \left(1-\frac{1}{n}\right) =0
\]
and
\[
\lim_{n \to \infty} \frac{n}{a^2_n} \log \left(\frac{\sqrt{5}+1}{2}\right)^{-2n} = - 2 \lim_{n \to \infty} \frac{n^2}{a^2_n} \log \left(\frac{\sqrt{5}+1}{2}\right) = -\infty,
\]
by Taylor formula, we eventually deduce that
\begin{align*}
&\limsup_{n \to \infty} \frac{n}{a_n^2} \log \mathbf{E}\left(\exp\left\{\frac{a_n}{n}\left(\log b_n -n
\right)\lambda\right\}\right) \\
\leq & \limsup_{n \to \infty}  \left\{\frac{n^2}{a_n^2}(-\theta_n)- \frac{n^2}{a^2_n} \log \left(1-\frac{1}{n}\right) - \frac{n^2}{a^2_n} \log \left(1-\theta_n\right)\right\}
= \frac{\lambda^2}{2}.
\end{align*}
Thus, the equality (\ref{desired result}) is established. By the G\"{a}rtner-Ellis theorem, we obtain the sequence $\left\{\frac{\log b_n -n}{a_n}: n\geq 1\right\}$ satisfies an MDP with speed $n^{-1}a_n^2$ and good rate function
\[
J(x) = \sup_{\lambda \in \mathbb{R}}\left\{\lambda x - \Lambda(\lambda)\right\} = x^2/2
\]
for any $x \in \mathbb{R}$.
\end{proof}

\section{Conclusions}
As we know, fractal and large deviations are two main tools of dealing with almost everywhere results. The former is concerned with the fractal structure (such as, Hausdorff dimension) of the set of points for which the desired almost everywhere result does not hold or attains any other values; while the latter considers the speeds of probabilities of the events that the desired almost everywhere result deviates away from its ergodic mean. They are the two different aspects of describing the same thing.
What is interesting is that these two different ways have some relations in some special cases (see Denker and Kesseb\"{o}hmer \cite{lesDK01}, Fang et al.~\cite{lesFWLa}, Pesin and Weiss \cite{lesPW01}).

Engel continued fractions and modified Engel continued fractions (see \cite{lesFangSPL}) are two typical examples of Oppenheim continued fractions introduced by Fan et al.~\cite{lesFWW07} in 2007. And modified Engel continued fractions is just a small modification of Engel continued fractions. Fan et al.~\cite{lesFWW07} have shown that these two continued fractions share the classical limit theorems, such as the law of large numbers, the central limit theorem, the law of the iterated logarithm, and other statical laws. From the fractal points of view, we can obtain that for any $\alpha \geq 0$, the set
\[
\left\{x \in (0,1]:\lim_{n \to \infty}\frac{\log e_n(x)}{n} =\alpha\right\}
\]
has full Hausdorff dimension following the idea of \cite{lesKW04, lesLW01},
where $\{e_n(x): n\geq 1\}$ is the partial quotient sequence of ECF expansions or modified ECF expansions. However, comparing the Theorem 1.1 of \cite{lesFangSPL} with our Theorem \ref{LDP} in Section 2, we know that ECF expansions and modified ECF expansions
have a difference in the context of large deviations. From this point of view, large deviation is a more useful tool than other tools in such questions.



\end{document}